\theoremstyle{change}%
\newtheorem{definition}{Definition:}[section]%
\newtheorem{proposition}[definition]{Proposition:}%
\newtheorem{theorem}[definition]{Theorem:}%
\newtheorem{lemma}[definition]{Lemma:}%
\newtheorem{corollary}[definition]{Corollary:}%
{\theorembodyfont{\rmfamily}\newtheorem{remark}[definition]{Remark:}}%
{\theorembodyfont{\rmfamily}\newtheorem{example}[definition]{Example:}}%
\newenvironment{proof}
{{\bf Proof:}}
{\qquad \hspace*{\fill} $\Box$}%
\newcommand{\fg}{\mathfrak{g}}%
\newcommand{\fs}{\mathfrak{s}}%
\newcommand{\fz}{\mathfrak{z}}%
\newcommand{\Ad}{\operatorname{Ad}}%
\newcommand{\ad}{\operatorname{ad}}%
\newcommand{\tr}{\operatorname{tr}}%
\newcommand{\id}{\operatorname{id}}
\newcommand{\cl}{\operatorname{cl}}%
\newcommand{\fix}{\operatorname{fix}}%
\newcommand{\rme}{\mathrm{e}}%
\newcommand{\dds}{\frac{d}{ds}}
\newcommand{\EC}{\mathcal{E}}%
\newcommand{\OC}{\mathcal{O}}%
\newcommand{\RC}{\mathcal{R}}%
\newcommand{\YC}{\mathcal{Y}}%
\newcommand{\XC}{\mathcal{X}}%
\newcommand{\DC}{\mathcal{D}}%
\newcommand{\NC}{\mathcal{N}}%
\newcommand{\HC}{\mathcal{H}}%
\newcommand{\T}{\mathbb{T}}%
\newcommand{\C}{\mathbb{C}}%
\newcommand{\N}{\mathbb{N}}%
\newcommand{\R}{\mathbb{R}}%
\newcommand{\Z}{\mathbb{Z}}%
\begin{document}

\title{Jordan decomposition and the recurrent set of flows of automorphisms}
\author{V\'{\i}ctor Ayala%
\thanks{
Supported by Proyecto Fondecyt n$%
%TCIMACRO{\U{b0}}%
%BeginExpansion
{{}^\circ}%
%EndExpansion
$ 1190142. Conicyt, Chile.} \\
Instituto de Alta Investigaci\'{o}n\\
Universidad de Tarapac\'{a}, Arica, Chile \and Adriano Da Silva\thanks{ Supported by Fapesp grant $n^{o}$ 2018/10696-6} \\
Instituto de Matem\'{a}tica\\
Universidade Estadual de Campinas, Brazil
\and Philippe Jouan \\ 
Laboratoire de Math\'{e}matiques Rapha\"{e}l Salem\\
Universit\'{e} de Rouen, France\\
}
\date{\today }
\maketitle

\begin{abstract}
In this paper we show that any linear vector field $\XC$ on a connected Lie group $G$ admits a Jordan
decomposition and the recurrent set of the associated flow of automorphisms is given as the
intersection of the fixed points of the hyperbolic and nilpotent components of its Jordan
decomposition.
\end{abstract}

\textbf{Key words:} Linear vector fields, recurrent points, Jordan decomposition

\textbf{2010 Mathematics Subject Classification:} 37B20, 54H20,  37B99.

\section{Introduction}

Many relevant applications are coming from physical problems where the state space is a Lie group. For instance, the Noether Theorem \cite{Noether}, states
that every differentiable symmetry of the action of a physical system has a corresponding conservation law. And, it is possible to associate symmetry with
dynamic through the notion of invariant vector fields on Lie groups. Furthermore, since the remarkable Brockett's paper "Systems theory on group manifolds and cosset spaces", in 1972, many people have been working in control theory from the geometric point of view. Specially, on invariant control systems. On the other hand, in \cite{VAJT}
the authors introduced the class of linear control systems on Lie groups, which is determined by a linear vector field $\mathcal{X}$ as a drift, controled by a number of invariant vector fields. This class of system is a perfect generalization of the classical linear control systems on Euclidean
space, and it is relevant from both, theoretical and practical point of view (see \cite{JPh1}). Recently, this class have been associated with
the almost-Riemannian structures (ARS) (see \cite{JPh2}). The analysis of any linear system and each ARS on a Lie group, depends strongly of the
dynamics of the drift (see for instance \cite{DSAy, DSAy1, DSAyGZ, VAASPJ, DS}). The main aim of this paper is to give a
contribution to understand the dynamic of $\mathcal{X}$. In order to do that, we first study the recurrent points of $\mathcal{X}$ and prove that any
linear vector field admits a mutiplicative Jordan decomposition of its flow, more precisely.

%Let $A$ be a linear map on a finite dimensional vector space $V.$
%We start to show that the
%recurrent points of the flow $e^{tA}$ depends just of the intersection of the
%fixed points of the hyperbolic and nilpotent parts. It turns out the same is
%true for a more general dynamic on a Lie group $G.$ 

A vector field $\mathcal{X}$ on $G$ is said to be
\textit{linear} if its flow $\{ \varphi_{t}\}_{t\in \mathbb{R}}$ is a
$1$-parameter subgroup of $\mathrm{Aut}(G)$. Associated to $\mathcal{X}$ there
is a derivation $\DC:\mathfrak{g}\rightarrow \mathfrak{g}$ of $\mathfrak{g}$ that
satisfies
\[
(d\varphi_{t})_{e}=\mathrm{e}^{t\mathcal{D}}\; \; \; \mbox{ for all }\; \; \;t\in
\mathbb{R}.
\]
Through the Lyapunov spectrum of $\DC$ we consider its well known additive Jordan decomposition into elliptic, hyperbolic and nilpotent parts. Then, we say $\mathcal{X}$ admits a Jordan decomposition if it can be written as a the sum of commutitive vector fields
\[
\mathcal{X}=\mathcal{X}_{\mathcal{E}}+\mathcal{X}_{\mathcal{H}}+\mathcal{X}%
_{\mathcal{N}},
\]
where $\mathcal{X}_{\mathcal{E}}$, $\mathcal{X}_{\mathcal{H}}$ and $\mathcal{X}_{\mathcal{N}}$ are linear vector fields whose associated derivation are the elliptic, hyperbolic and nilpotent parts of $\DC$, respectively. Although any linear map admits a Jordan decomposition, the same decomposition for linear vector fields depends on the integration of automorphisms of $\fg$ to automorphism of $G$, which is in general not always possible. However, if a linear vector field admits
a Jordan decomposition and we denote by $\{\varphi_{t}^{\mathcal{H}}\}_{t\in\R}$ and $\{\varphi_{t}^{\mathcal{N}}\}_{t\in\R}$ the flows of $\XC_{\HC}$ and $\XC_{\NC}$, respectively, then 

\begin{theorem}
	It holds that
	\[
	\mathcal{R}(\varphi_{t})=\operatorname{fix}(\varphi_{t}^{\mathcal{H}}%
	)\cap \operatorname{fix}(\varphi_{t}^{\mathcal{N}}).
	\]
	
\end{theorem}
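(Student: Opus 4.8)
The plan is to reduce the statement to the Euclidean/linear-algebraic Jordan decomposition picture by exploiting the structure of linear vector fields on Lie groups. Recall that for a linear vector field $\XC$ with flow $\{\varphi_t\}$, a point $g\in G$ is recurrent iff it lies in its own $\omega$-limit set. Since $\XC = \XC_\EC + \XC_\HC + \XC_\NC$ with the three flows commuting, we have $\varphi_t = \varphi_t^\EC \circ \varphi_t^\HC \circ \varphi_t^\NC$, and the associated derivations $\DC_\EC, \DC_\HC, \DC_\NC$ commute and are the elliptic, hyperbolic, nilpotent parts of $\DC$. The elliptic flow $\{\varphi_t^\EC\}$ has all its orbits bounded with compact closure (its eigenvalues are purely imaginary), so it is pointwise recurrent: $\fix$ plays no role for it, which is why only $\varphi_t^\HC$ and $\varphi_t^\NC$ appear. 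The goal is to show that recurrence of $g$ under $\varphi_t$ forces $g$ to be simultaneously fixed by the hyperbolic and nilpotent flows, and conversely.

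**First I would** establish the inclusion $\supseteq$, which should be the easy direction: if $g\in\fix(\varphi_t^\HC)\cap\fix(\varphi_t^\NC)$ for all $t$, then $\varphi_t(g) = \varphi_t^\EC(g)$, and since the elliptic flow has precompact orbits and generates (the closure of) a compact group acting on $G$, every point is recurrent under it; hence $g\in\RC(\varphi_t)$. **Then I would** attack $\subseteq$. Here the natural tool is a decomposition of $G$ adapted to the dynamics — the analogue of the stable/unstable/center splitting. Using the generalized eigenspace decomposition of $\DC$ one gets subalgebras $\fg^{+}, \fg^{-}, \fg^{0}$ (corresponding to eigenvalues with positive, negative, zero real part) and these integrate to closed subgroups $G^{+}, G^{-}, G^{0}$ with $G = G^{+}G^{0}G^{-}$ (this kind of decomposition is standard for linear vector fields and is presumably recalled or citable from \cite{DSAy, DS}). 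Writing $g = g^{+}g^{0}g^{-}$, recurrence forces the $G^{+}$ and $G^{-}$ components to be trivial: under forward iteration $\varphi_t$ expands $G^{+}$ and contracts $G^{-}$, so a nontrivial component there prevents the orbit from returning near $g$. Thus a recurrent $g$ lies in $G^{0}$, the fixed-point set of the hyperbolic flow is contained in $G^{0}$, and on $G^0$ the hyperbolic part acts trivially — giving $g\in\fix(\varphi_t^\HC)$. It then remains to show $g\in\fix(\varphi_t^\NC)$: on $G^{0}$ the dynamics is driven by $\varphi_t^\EC\circ\varphi_t^\NC$, an elliptic-times-nilpotent (``unipotent-distorted'') flow; a point with nontrivial nilpotent component has an orbit that drifts to infinity polynomially (the classic fact that $e^{tN}v$ is unbounded when $Nv\neq 0$), which again destroys recurrence, while the purely elliptic part alone is recurrent.

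**The main obstacle** I expect is making the ``expansion/contraction pushes the orbit away'' and ``polynomial drift kills recurrence'' arguments rigorous \emph{on the group}, rather than on the Lie algebra. On $\fg$ these are immediate from linear algebra, but transferring them to $G$ requires controlling how the decomposition $G=G^{+}G^{0}G^{-}$ interacts with the flow and with a left-invariant metric — in particular showing that the $G^{\pm}$-components of $\varphi_t(g)$ genuinely escape (or collapse) and cannot conspire with the $G^{0}$-dynamics to produce spurious returns. One clean way to handle this is to use the semidirect/normal structure: typically $G^{-}$ (or $G^{+}G^{-}$, the "hyperbolic part") is a normal subgroup, so one can pass to the quotient and argue there, then lift. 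I would carry out the argument by first reducing mod the normal hyperbolic subgroup to kill the $G^{\pm}$ components, then work inside $G^{0}$ where $\XC_\HC$ vanishes and only the commuting pair $(\XC_\EC,\XC_\NC)$ survives, and finally invoke the nilpotent-drift estimate there. The verification that each reduction step is compatible with ``recurrent'' and with ``$\fix$'' is where the real work lies, but it is of a routine flavor once the geometric picture above is in place.
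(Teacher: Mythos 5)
Your easy inclusion $\fix(\varphi_t^{\HC})\cap\fix(\varphi_t^{\NC})\subset\RC(\varphi_t)$ is essentially the paper's argument (a left-invariant metric making $\{\varphi_t^{\EC}\}$ a flow of isometries, Theorem \ref{iso}). The hard inclusion, however, has a genuine gap right at its foundation: the global decomposition $G=G^{+}G^{0}G^{-}$ that your whole reduction rests on does not exist for a general connected Lie group. The paper only guarantees it for solvable $G$ (Proposition \ref{dynamical}, item 4). For $G=\Sl(2,\R)$ with an inner derivation $\DC=\ad(X)$, $X$ hyperbolic, the product $G^{-}G^{0}G^{+}$ is a proper open cell, so your argument says nothing about the points outside it. The auxiliary claim that $G^{-}$ (or $G^{+}G^{-}$) is normal, which you invoke to pass to a quotient, also fails in this example. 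A second, subtler gap is in the nilpotent step: the quantity that must be shown to drift is not an orbit $\rme^{tN}v$ of a linear flow but the cocycle $t\mapsto x^{-1}\varphi_t^{\NC}(x)$, which satisfies $\gamma_{t+s}=\gamma_t+\rme^{tA}\gamma_s$ rather than $\gamma_{t+s}=\rme^{tA}\gamma_s$; the ``classic fact'' you cite is not the statement you need, and the compact central directions (the toral component $T(G)$, where nothing drifts) must be split off separately.

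The paper's route around both obstacles is quite different and worth noting. Instead of decomposing $G$, it pushes a recurrent point through the adjoint representation: $\Ad\circ\varphi_t=\rme^{t\ad(\DC)}|_{\Ad(G)}$, so the finite-dimensional Jordan result (Remark \ref{a}) applies and forces $\zeta_t:=x^{-1}\varphi_t^{\HC,\NC}(x)$ to lie in $Z(G)_0$ for all $t$. When $T(G)$ is trivial, $Z(G)_0$ is simply connected, $\zeta$ lifts to a curve $\gamma$ in $\fz(\fg)$ satisfying the cocycle identity above, and the dedicated Lemma \ref{gamma} (applied for $t\to+\infty$ and $t\to-\infty$) forces $\gamma_t\in\fg^{-}\cap\fg^{+}=\{0\}$. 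The case $T(G)\neq\{e\}$ is handled by passing to $G/T(G)$ and using $T(G)\subset\fix(\varphi_t^{\HC,\NC})$. If you want to salvage your plan, you would need to either restrict to decomposable (e.g.\ solvable) groups or find a substitute for the missing global decomposition; the adjoint-plus-center reduction is precisely that substitute.
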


We build the proof by considering the cases where the toral component $T(G)$
of $G$ is trivial or not. Here $T(G)$ is the maximal compact, connected
subgroup of $Z(G)_{0}$. The knowledge of $\RC(\varphi_t)$ and the fact that any linear vector field on simply connected Lie groups admits a Jordan decomposition allows us to prove that in fact,  

\begin{theorem}
	Any linear vector field $\mathcal{X}$ on a connected Lie group $G$ admits a
	Jordan decomposition.
\end{theorem}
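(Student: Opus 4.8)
The plan is to lift the whole situation to the universal cover, where a Jordan decomposition is already available, and then to exploit the first theorem above to show that the hyperbolic and nilpotent components there descend to $G$. Let $\pi:\widetilde G\to G$ be the universal covering homomorphism, $\Gamma=\ker\pi$ the associated discrete central subgroup, and recall that $d\pi_e$ is the identity of $\fg$. First I would invoke standard covering-space theory: the one-parameter group $\{\varphi_t\}_{t\in\R}\subset\mathrm{Aut}(G)$ lifts uniquely, fixing $e$, to a smooth one-parameter group $\{\widetilde\varphi_t\}_{t\in\R}\subset\mathrm{Aut}(\widetilde G)$ with $\pi\circ\widetilde\varphi_t=\varphi_t\circ\pi$; differentiating at $e$ and using $d\pi_e=\id_{\fg}$ shows that $\{\widetilde\varphi_t\}$ is the flow of a linear vector field $\widetilde\XC$ on $\widetilde G$ whose associated derivation is again $\DC$. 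Since $\widetilde G$ is simply connected, $\widetilde\XC$ admits a Jordan decomposition $\widetilde\XC=\widetilde\XC_{\EC}+\widetilde\XC_{\HC}+\widetilde\XC_{\NC}$ by commuting linear vector fields with derivations $\DC_{\EC},\DC_{\HC},\DC_{\NC}$; write $\widetilde\varphi_t^{\EC},\widetilde\varphi_t^{\HC},\widetilde\varphi_t^{\NC}$ for their pairwise commuting flows.

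The heart of the argument is the remark that $\Gamma$ is made of fixed points of $\{\widetilde\varphi_t\}$. Indeed, for $\gamma\in\Gamma$ one has $\pi(\widetilde\varphi_t(\gamma))=\varphi_t(\pi(\gamma))=\varphi_t(e)=e$, so $t\mapsto\widetilde\varphi_t(\gamma)$ is a continuous path in the discrete set $\Gamma$ starting at $\gamma$, hence identically equal to $\gamma$. In particular $\Gamma\subseteq\RC(\widetilde\varphi_t)$. Applying the first theorem above to $\widetilde\XC$ on $\widetilde G$ — which is legitimate precisely because $\widetilde\XC$ does admit a Jordan decomposition — yields $\RC(\widetilde\varphi_t)=\fix(\widetilde\varphi_t^{\HC})\cap\fix(\widetilde\varphi_t^{\NC})$, and therefore $\widetilde\varphi_t^{\HC}(\gamma)=\widetilde\varphi_t^{\NC}(\gamma)=\gamma$ for all $\gamma\in\Gamma$ and all $t$. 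Since $\widetilde\varphi_t^{\EC}=\widetilde\varphi_t\circ(\widetilde\varphi_t^{\NC})^{-1}\circ(\widetilde\varphi_t^{\HC})^{-1}$, the elliptic flow fixes $\Gamma$ pointwise as well.

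It then remains to descend everything to $G$. As $\Gamma$ is central and $\widetilde\varphi_t^{\HC}$ fixes it pointwise, $\widetilde\varphi_t^{\HC}$ sends each coset $x\Gamma$ to $\widetilde\varphi_t^{\HC}(x)\Gamma$, hence induces a well-defined automorphism $\varphi_t^{\HC}$ of $G=\widetilde G/\Gamma$; smoothness and the group law give a one-parameter subgroup $\{\varphi_t^{\HC}\}\subset\mathrm{Aut}(G)$, i.e.\ the flow of a linear vector field $\XC_{\HC}$ on $G$, and again $d\pi_e=\id_{\fg}$ forces its derivation to be $\DC_{\HC}$. The same construction produces $\XC_{\EC}$ and $\XC_{\NC}$ with derivations $\DC_{\EC}$ and $\DC_{\NC}$. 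Finally, $\pi$-relatedness transports the identity $\widetilde\XC=\widetilde\XC_{\EC}+\widetilde\XC_{\HC}+\widetilde\XC_{\NC}$ and the vanishing of the pairwise brackets $[\widetilde\XC_{\EC},\widetilde\XC_{\HC}]$, $[\widetilde\XC_{\EC},\widetilde\XC_{\NC}]$, $[\widetilde\XC_{\HC},\widetilde\XC_{\NC}]$ down to $G$, so that $\XC=\XC_{\EC}+\XC_{\HC}+\XC_{\NC}$ with pairwise commuting summands, which is the required Jordan decomposition.

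I do not expect any step of the descent to be difficult: those are the routine covering-space facts (automorphisms and one-parameter groups lift; equivariant maps fixing a central subgroup pointwise descend to the quotient) together with the bookkeeping of the induced derivations. The genuinely substantial input is the first theorem above, and the one point that really needs care is confirming that it may be applied on $\widetilde G$ without circularity — which it can, since the simply connected case of the Jordan decomposition is independently available. The conceptual crux, once that is granted, is simply the observation that the deck group $\Gamma$ consists of fixed, hence recurrent, points, so that the recurrent-set theorem pins it inside $\fix(\widetilde\varphi_t^{\HC})\cap\fix(\widetilde\varphi_t^{\NC})$.
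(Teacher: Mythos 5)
Your proof is correct and follows essentially the same route as the paper's: lift to the universal cover, invoke the simply connected case, observe that the discrete central kernel consists of fixed (hence recurrent) points so that the recurrent-set theorem forces it into $\fix(\widetilde\varphi_t^{\HC})\cap\fix(\widetilde\varphi_t^{\NC})$ (and then, by composition, into $\fix(\widetilde\varphi_t^{\EC})$), and descend each component to $G$. The only difference is that the paper spells out the simply connected case in detail (integrating each of $\DC_{\EC},\DC_{\HC},\DC_{\NC}$ to a linear vector field via Theorem 2.2 of the Ayala--Tirao reference and verifying the product and commutation identities on the exponential image), whereas you take that step as given.
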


In particular, the set of recurrent points of any linear vector field $\mathcal{X}$ is completely characterized by the hyperbolic and nilpotent parts of its Jordan decomposition. Finally, through the so called Arnold's cat map, we show that the main result about the recurrent point of a linear vector field is not true for discrete-time flows.

The paper is structured as follows: In Section 2 we introduce all the background needed for the whole paper. We also prove some results concerning linear flows on vector spaces and some properties of linear vector fields admiting Jordan decomposition. In Section 3 we prove our main result characterizing the set of recurrent points of linear vector fields by means of its Jordan decomposition. Section 4 is used to prove that any linear vector field admits in fact a Jordan decomposition. We finish the paper with an Appendix where we analyze when the flow of a linear vector field is a flow of isometries for a given almost-Riemannian structure.

\subsection*{Notations}
If $H\subset G$ is a subgroup, we denote by $H_0$ the connected component of $H$ containing the identity element $e\in G$. If $H=\{e\}$ we say that $H$ is a trivial subgroup of $G$. By $L_g$ and $R_g$ we denote, respectively, the left and right-translations by $g$. The conjugation of $g$ is the map $C_g:=L_g\circ R_{g^{-1}}$. The center $Z(G)$ of $G$ is the set of elements in $G$ that satisfy $C_g=\id_G$. If $f:G\rightarrow H$ is a differentiable map between Lie groups, the differential of $f$ at $x$ is denoted by $(df)_x$. 	 

\section{Preliminaries}

Let $X$ be a topological space and $\{\phi_t\}_{t\in\R}$ a flow on $X$. For any $x\in X$ the {\it (positive) orbit} of $\phi_t$ is the set
$$\left(\OC^+(x, \phi):=\{\phi_t(x); \;\;t\geq 0\}\right)\;\;\;\OC(x, \phi):=\{\phi_t(x); \;\;t\in\R\}.$$

The set of {\it fixed points} of $\phi_t$ read as
$$\fix(\phi_t):=\{x\in X;\;\;\phi_t(x)=x, \;\;\forall t\in\R\}.$$

The set of {\it recurrent points} of $\phi_t$ is given by
$$\RC(\phi_t):=\left\{x\in X;\;\;\exists t_k\rightarrow+\infty;\;\;\phi_{t_k}(x)\rightarrow x\right\}.$$
Of course one have that $\fix(\phi_t)\subset\RC(\phi_t)$. In the next sections we relate the set of fixed and recurrent points with the Jordan decomposition of flows on vector spaces and more generally on Lie groups.

\subsection{Dynamics on vector spaces}

Let $V$ be a finite real vector space and $A:V\rightarrow V$ a linear map. Recall that $A$ is {\it semisimple} if its extension to the complexification $V_{\C}$ is diagonalizable. We say that $A$ is {\it nilpotent} if $A^n\equiv 0$ for some $n\in\N$ and $A$ is {\it elliptic (hyperbolic)} if it is semisimple and its eigenvalues are pure imaginary (real). The {\it (additive) Jordan decomposition} of $A$ reads as 
$$A=A_{\EC}+A_{\HC}+A_{\NC}, \;\;\mbox{ where }\;\;A, A_{\EC}, A_{\HC}\;\mbox{ and }\;A_{\NC}\;\mbox{ commute},$$
$A_{\EC}$ is elliptic, $A_{\HC}$ is hyperbolic and $A_{\NC}$ is nilpotent.

For any given eigenvalue $\lambda$ of $A_{\HC}$ let $V_{\lambda}:=\{v\in V;\;A_{\HC} v=\lambda v\}$ be its associated eigenspace. Define the $A, A_{\EC}, A_{\HC}, A_{\NC}$-invariant vector subspaces of $V$
$$V^+=\bigoplus_{\lambda>0}V_{\lambda}, \;\;\;V^0=\ker A_{\HC}\;\;\mbox{ and }\;\;V^-=\bigoplus_{\lambda<0}V_{\lambda}.$$
Then $V=V^+\oplus V^0\oplus V^-$ and for any norm on $V$ there exists $\lambda, t_0>0$ such that
\begin{equation}
\label{expanding}
\left\|\rme^{tA}_{|_{V^+}}\right\|\geq\rme^{t\lambda}\;\;\mbox{ and }\;\;\left\|\rme^{tA}_{|_{V^-}}\right\|\leq\rme^{-t\lambda}, \;\;\mbox{ for }\;t\geq t_0.
\end{equation}
Moreover, there is an inner product $\langle\cdot, \cdot\rangle$ in $V$ such that $\rme^{tA_{\EC}}$ is an isometry for any $t\in\R$.

We will use the notation $V^{+, 0}$ and $V^{-, 0}$ for the subspaces $V^+\oplus V^0$ and $V^-\oplus V^0$, respectively. 

The next lemma will be important in the context of linear vector fields ahead.

\begin{lemma}
	\label{gamma}
	Let $\gamma:\R\rightarrow V$ be a continuous curve that satisfies
	\begin{equation*}
	\gamma_{t+s}=\gamma_t+\rme^{tA}\gamma_s, \;\;\forall t, s\in \R.
	\end{equation*}
	If $A$ has no eliptical part and $(\gamma_{t_k})_{k\in\N}$ is bounded for some sequence $t_k\rightarrow\pm\infty$, then $\gamma_t\in V^{\mp}$ for all $t\in\R$. In particular, $(\gamma_t)_{t\geq 0}$ is bounded.
	\end{lemma}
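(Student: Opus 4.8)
The plan is to reduce the functional equation for $\gamma$ to an ordinary differential equation. First I would record the elementary consequences of the cocycle identity: taking $t=s=0$ gives $\gamma_0=0$, and since the splitting $V=V^{+}\oplus V^{0}\oplus V^{-}$ is $\rme^{tA}$-invariant, the $V^{+}$-, $V^{0}$- and $V^{-}$-components of $\gamma_t$ each satisfy the same identity with $A$ replaced by its restriction to the corresponding subspace.

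Next I would upgrade the regularity of $\gamma$. Integrating $\gamma_{t+s}=\gamma_t+\rme^{tA}\gamma_s$ over $s\in[0,1]$ and setting $b:=\int_0^1\gamma_u\,du$ gives
\[
\gamma_t=\int_t^{t+1}\gamma_u\,du-\rme^{tA}b ,
\]
whose right-hand side is $C^1$ because $\gamma$ is continuous; a routine bootstrap on this same identity then yields $\gamma\in C^\infty$. Differentiating the cocycle identity in $s$ at $s=0$ gives $\gamma_t'=\rme^{tA}\xi$ with $\xi:=\gamma_0'$, hence, using $\gamma_0=0$,
\[
\gamma_t=\int_0^t\rme^{uA}\xi\,du,\qquad t\in\R .
\]
Everything then reduces to analyzing this integral through the decomposition $\xi=\xi^{+}+\xi^{0}+\xi^{-}$.

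Now I would eliminate $\xi^{+}$ and $\xi^{0}$ under the boundedness hypothesis, say with $t_k\to+\infty$. On $V^{+}$ the map $A|_{V^{+}}$ is invertible, so $\gamma_t^{+}=(A|_{V^{+}})^{-1}\bigl(\rme^{tA}-\id\bigr)\xi^{+}$; if $(\gamma_{t_k}^{+})$ is bounded then so is $(\rme^{t_kA}\xi^{+})$, and applying $\rme^{-t_kA}|_{V^{+}}$, whose norm tends to $0$ by (\ref{expanding}) applied to $-A$ (for which $V^{+}(A)=V^{-}(-A)$), forces $\xi^{+}=0$. On $V^{0}$ the restriction $A|_{V^{0}}$ is nilpotent — here the absence of an elliptic part is used — so $\gamma_t^{0}$ is a vector-valued polynomial in $t$ with zero constant term and linear coefficient exactly $\xi^{0}$; a nonzero such polynomial is unbounded as $|t|\to\infty$, so $\xi^{0}=0$. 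Thus $\xi=\xi^{-}\in V^{-}$, whence $\gamma_t=\int_0^t\rme^{uA}\xi^{-}\,du\in V^{-}$ for all $t$, and for $t\ge 0$ we get $\|\gamma_t\|\le\|\xi^{-}\|\int_0^{\infty}\|\rme^{uA}|_{V^{-}}\|\,du<\infty$ by (\ref{expanding}), so $(\gamma_t)_{t\ge 0}$ is bounded. The case $t_k\to-\infty$ follows by applying the above to $\tilde\gamma_t:=\gamma_{-t}$, which satisfies the cocycle identity for $-A$ and is bounded along $-t_k\to+\infty$, giving $\gamma_t\in V^{+}$ for all $t$.

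The main obstacle is the regularity upgrade: a priori $\gamma$ is only continuous and the cocycle identity does not make it differentiable in any obvious way, so the averaging step producing the formula $\gamma_t=\int_t^{t+1}\gamma_u\,du-\rme^{tA}b$ is the crux; once $\gamma$ is known to be smooth the remainder is a direct computation with the Jordan structure. (One can also avoid smoothness altogether, working componentwise on $V^{+}$, $V^{0}$, $V^{-}$ with only (\ref{expanding}), the polynomial growth of the nilpotent flow, and continuity of $\gamma$ on compact intervals, but the ODE route is cleaner.)
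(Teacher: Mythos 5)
Your proof is correct, but it takes a genuinely different route from the paper. The paper never differentiates $\gamma$: it works entirely at discrete times, writing $t_k=rn_k+r_k$, telescoping the cocycle identity into $\gamma_{rn}=\sum_j \rme^{jrA}\gamma_r$, and then killing $\gamma_r$ on $V^+$ via invertibility of $1-\rme^{rA}$ and a geometric lower bound, and on $V^0$ via the asymptotics of the power sums $\sum_{j\le n}j^q$; the final boundedness claim is again a discrete geometric-series estimate. You instead bootstrap regularity with the averaging identity $\gamma_t=\int_t^{t+1}\gamma_u\,du-\rme^{tA}b$ (a standard device for continuous cocycles, and indeed the crux of your argument — it is valid, since the right-hand side is $C^1$ and iterating gives smoothness), reduce to $\gamma_t=\int_0^t\rme^{uA}\xi\,du$ with $\xi=\gamma_0'$, and then the componentwise analysis becomes pure linear algebra: $\xi^+=0$ because $\|\rme^{-t_kA}|_{V^+}\|\to0$ by (\ref{expanding}) applied to $-A$, $\xi^0=0$ because a vector-valued polynomial vanishing at $0$ and bounded along $t_k\to+\infty$ is identically zero (this is where the absence of an elliptic part enters, exactly as in the paper), and the forward boundedness follows from $\int_0^\infty\|\rme^{uA}|_{V^-}\|\,du<\infty$. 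Your closed-form expression buys a shorter, case-free endgame and an immediate boundedness estimate; the paper's discrete argument buys independence from any differentiability considerations and stays closer in spirit to the sequence $t_k$ given in the hypothesis. All steps check out, including the reduction of the $t_k\to-\infty$ case to the $+\infty$ case via $\tilde\gamma_t=\gamma_{-t}$ and $-A$, which mirrors the paper's "by analogy" reduction.
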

	
	\begin{proof} By analogy it is only necessary to show that if $(\gamma_{t_k})_{k\in\N}$ is bounded for some sequence $t_k\rightarrow+\infty$ then $\gamma_t\in V^-$. Moreover, it is enough to consider the cases where $A$ is nilpotent or where it has only eigenvalues with positive real parts.
		
		In fact, if $W, U\subset V$ are $A$-invariant subspaces such that $V=W\oplus U$ and assume w.l.o.g. that $U=W^{\perp}$. If $\pi:V\rightarrow W$ is the orthogonal projection onto $W$ and we define $\gamma^W_t:=\pi(\gamma_t)$, the $A$-invariance of $W$ and $U$ implies that 
		$$\gamma^W_{t+s}=\pi(\gamma_{t+s})=\pi(\gamma_t+\rme^{tA}\gamma_s)=\pi(\gamma_t)+\pi(\rme^{tA}\gamma_s)=\pi(\gamma_t)+\rme^{t A}\pi(\gamma_s)=\gamma^W_t+\rme^{t A}\gamma^W_s.$$
		Moreover, the fact that $|\pi(v)|\leq |v|$ for any $v\in V$ gives us that 
		$$(\gamma_{t_k})_{k\in\N}\;\;\mbox{is bounded } \;\;\implies \;\;(\gamma^W_{t_k})_{k\in\N}\;\;\mbox{ is bounded.}$$
		In particular, the result follows if we show that 
		$$(\gamma^W_{t_k})_{k\in\N}\;\;\mbox{ is bounded} \;\;\implies \;\;\gamma^W\equiv 0\;\;\mbox{ for }\;\; W=V^+\;\;\mbox{ and }\; \;W=V^0,$$
		that is, we only have to analyze the cases where $A$ is nilpotent or it has only eigenvalues with positive real parts. 
				
		On the other hand, for any $k\in\N$ and $r\in (0, 1]$ we can write $t_k=rn_k+r_k$ with $n_k\in\N$ and $r_k\in [0, 1)$. Then
		$$|\gamma_{rn_k}|=|\gamma_{t_k-r_k}|=|\gamma_{-r_k}+\rme^{-r_kA}\gamma_{t_k}|\leq |\gamma_{-r_k}|+\|\rme^{-r_kA}\||\gamma_{t_k}|$$
		and 
		$$|\gamma_{t_k}|=|\gamma_{rn_k+r_k}|=|\gamma_{r_k}+\rme^{r_kA}\gamma_{rn_k}|\leq |\gamma_{r_k}|+\|\rme^{r_kA}\||\gamma_{rn_k}|.$$
		Since by continuity, 
		$$\{\gamma_{t}, \;\;t\in[-1, 1]\}\;\;\;\mbox{ and }\;\;\;\{\|\rme^{tA}\|, \;\;t\in [-1, 1]\}\;\;\;\mbox{ are bounded,}$$
		then		 
		$$(\gamma_{t_k})_{k\in\N}\;\;\mbox{ is bounded}\;\;\iff \;\;(\gamma_{rn_k})_{k\in\N}\;\;\mbox{ is bounded, }$$
		and so we can assume w.l.o.g. that $t_k=rn_k$ with $n_k\in \N$ and $r\in(0, 1]$. Moreover, since for any $t\in\R$ there exists $n\in\Z$ and $r\in(0, 1]$ such that $t=nr$, then 
		$$\gamma_r=0 \;\; \forall r\in(0, 1]\;\;\implies\;\;\gamma_t=\gamma_{nr}=\sum_{j=0}^n\rme^{jrA}\gamma_r=0\;\;\forall t\in \R$$ 
		and we only have to show that if $A$ is nilpotent or it has only eigenvalues with positive real parts then $(\gamma_{rn_k})_{k\in\N}$ bounded for some sequence $n_k\rightarrow+\infty$ implies $\gamma_r=0$. 
		
		\bigskip
		
		{\bf Case 1:} $A$ has only eigenvalues with positive real parts;
		
		Since,
		$$(1-\rme^{rA})\gamma_{rn_k}=(1-\rme^{rA})\sum_{j=0}^{n_k}\rme^{jrA}\gamma_r=(1-\rme^{(n_k+1)rA})\gamma_r,$$
		and $1-\rme^{rA}$ is invertible, it holds that 
		$$(\gamma_{rn_k})_{k\in\N}\;\;\mbox{ is bounded}\;\;\iff \;\;((1-\rme^{(n_k+1)rA})\gamma_r)_{k\in\N}\;\;\mbox{ is bounded. }$$
		However,  
		$$\left|(1-\rme^{(n_k+1)rA})\gamma_r\right|\geq \left|\rme^{(n_k+1)rA}\gamma_r\right|-\left|\gamma_r\right|\geq \rme^{(n_k+1)\lambda}\left|\gamma_r\right|-\left|\gamma_r\right|,$$
		and hence, 
		$$(\gamma_{rn_k})_{k\in\N}\;\;\mbox{ is bounded }\;\;\iff\;\; \gamma_r=0.$$
		
		\bigskip
		
		{\bf Case 2:} $A$ is nilpotent;
		
		Let $p\in\N$ be the greatest integer such that $A^p\gamma_r\neq 0$. Then
		$$\gamma_{rn_k}=\sum_{j=0}^{n_k}\rme^{jrA}\gamma_r=\sum_{j=0}^{n_k}\left(\gamma_r+jrA\gamma_r+\cdots+\frac{(jr)^p}{p!}A^p\gamma_r\right)=(n_k+1)\gamma_r+\left(\sum_{j=1}^{n_k}j\right)rA\gamma_r+\cdots+\left(\sum_{j=1}^{n_k}j^p\right)\frac{r^p}{p!}A^p\gamma_r.$$
		If $p\geq 1$, a simple calculation shows that 
		$$\lim_{k\rightarrow+\infty}\left(\sum_{j=1}^{n_k}j^p\right)^{-1}(n_k+1)=\lim_{k\rightarrow+\infty}\left(\sum_{j=1}^{n_k}j^p\right)^{-1}\left(\sum_{j=1}^{n_k}j^q\right)=0, \;\;\; \mbox{ if }\;\;1\leq q\leq p-1$$
		and hence, there exists $k_0\in\N$ such that 
		$$|\gamma_{rn_k}|\geq \frac{1}{2}\left(\sum_{j=1}^{n_k}j^p\right)\frac{r^p}{p!}\left|A^p\gamma_r\right|\geq n_k\frac{r^p}{2(p!)}\left|A^p\gamma_r\right|, \;\;k\geq k_0.$$
		Consequently, 
		$$(\gamma_{rn_k})_{k\in\N}\;\;\mbox{ bounded }\;\;\implies\;\; A^p\gamma_r=0, \;\;\forall p\geq 1.$$
		However, $A^p\gamma_r=0$ for all $p\geq 1$ implies necessarily that $\gamma_{rn_k}=(n_k+1)\gamma_r$ which is bounded if and only if $\gamma_r=0$.
		
		By the previous cases, $\gamma_t\in V^-$ for all $t>0$. By considering $0<t<s$ we have that 
		$$\gamma_{-t+s}=\gamma_{-t}+\rme^{-tA}\gamma_s\;\;\implies\;\;\gamma_{-t}=\gamma_{-t+s}-\rme^{-tA}\gamma_s\in V^-+\rme^{-tA}V^-\subset V^-,$$
		therefore $\gamma_t\in V^-$ for all $t\in\R$. 
		
		For the last assertion, let $t\geq t_1$ and consider $p_t\in\N$ and $r_t\in[0, 1)$ such that $t=p_tt_1+r_t$. Then,  
		$$\gamma_{t}=\gamma_{p_tt_1+r_t}=\gamma_{r_t}+\rme^{r_tA}\left(\sum_{j=0}^{p_t}\rme^{jt_1}\gamma_{t_1}\right),$$
		and so, by equation (\ref{expanding}), we obtain
		$$|\gamma_t|\leq |\gamma_{r_t}|+\rme^{-r_t\lambda}\left(\sum_{j=0}^{p_t}\rme^{-jt_1\lambda}\right)|\gamma_{t_1}|=|\gamma_{r_t}|+\rme^{-r_t\lambda}|\gamma_{t_1}|\frac{1-\rme^{-(p_t+1)t_1\lambda}}{1-\rme^{-t_1\lambda}}.$$
		Since 
		$$\rme^{-r_t\lambda}\left(1-\rme^{-(p_t+1)t_1\lambda}\right)\leq 1 \;\;\mbox{ and }\;\;\gamma\;\;\mbox{ is continuous, }$$
		the assertion follows.		
		\end{proof}	
		
		\bigskip
		
		We finish this section with some remarks concerning recurrent sets, fixed points and Jordan decomposition of the matrix exponential.
		
	\begin{remark}
		\label{a}
		A slight modification of Proposition 4.2 of \cite{Mau2} gives us that the recurrent set of $\rme^{tA}$ are related with the fixed points of $\rme^{tA_{\HC}}, \rme^{tA_{\NC}}$ and $\rme^{tA_{\HC, \NC}}$ by
		$$\RC(\rme^{tA})=\fix(\rme^{tA_{\HC}})\cap\fix(\rme^{tA_{\NC}})=\fix(\rme^{tA_{\HC, \NC}}),$$
		where $A_{\HC, \NC}:=A_{\HC}+A_{\NC}$.
	\end{remark}

	\begin{remark}
		\label{ad}
		If $V$ is a finite vector space and $A:V\rightarrow V$ is a linear map, the commutator of $A$ is defined by
		$$\ad(A):\mathfrak{gl}(V)\rightarrow\mathfrak{gl}(V), \;\;\ad(A)B:=[A, B]=BA-AB.$$
		Note that $[\ad(A), \ad(B)]=\ad([A, B])$ implying that $\ad(A)$ and $\ad(B)$ commutes when $A$ and $B$ commutes. Also, a simple calculation shows that 
		$$\ad(A)^k(B)=\sum_{j=0}^k(-1)^k\left(\begin{array}{c} k\\ j\end{array}\right) A^{k-j}BA^j$$
		and hence, $A^k=0$ implies $\ad(A)^{2k-1}=0$.
		Moreover, if $\{v_1, \ldots, v_m\}$ is a basis of $V$ such that $Av_i=\lambda_iv_i$, $i=1, \ldots, m$ and $E_{i, j}\in \mathfrak{gl}(V)$ satisfies 
		$$E_{i, j}v_k=\left\{\begin{array}{cc}
		v_i & \mbox{ if }k=j\\
		0 & \mbox{ if }k\neq j.
		\end{array}\right.\;\;\implies \ad(A)E_{i, j}=(\lambda_i-\lambda_j)E_{i, j},
		$$
		it follows that, $A$ diagonalizable implies $\ad(A)$ diagonalizable. Consequently, if $A=A_{\EC}+A_{\HC}+A_{\NC}$ is the Jordan decomposition of $A$ then $\ad(A)=\ad(A_{\EC})+\ad(A_{\HC})+\ad(A_{\NC})$ is the Jordan decomposition of $\ad(A)$.	
	\end{remark}

\subsection{Linear vector fields}

In this section we introduce the notion of  linear vector fields and its main properties. Let $G$ be a connected Lie group with Lie algebra $\mathfrak{g}$ identified with the set of left invariant vector fields. 

A vector field $\mathcal{X}$ on $G$ is said to be {\it linear} if its flow $\{\varphi_t\}_{t\in \R}$ is a $1$-parameter subgroup of $\mathrm{Aut}(G)$. Associated to any linear vector field $\mathcal{X}$ there is a derivation $\mathcal{D}$ of $\mathfrak{g}$ that satisfies
\begin{equation*}
(d\varphi_{t})_{e}=\mathrm{e}^{t\mathcal{D}}\; \; \; \mbox{ for all }\; \;
\;t\in \mathbb{R}. \label{derivativeonorigin}
\end{equation*}%
In particular, it holds that
\begin{equation*}
\varphi _{t}(\exp Y)=\exp (\mathrm{e}^{t\mathcal{D}}Y),\mbox{ for all }t\in 
\mathbb{R},Y\in \mathfrak{g}.
\end{equation*}

For any connected Lie group $G$ we denote by $T(G)$ the {\it toral component} of $G$, that is, the maximal compact, connected subgroup of $Z(G)_0$. As a standard fact we know that if $T(G)$ is trivial, then $Z(G)_0$ is simply connected and that $T\left(G/T(G)\right)$ is trivial (see \cite[Proposition 3.3]{Mau}).

Let $\{\varphi_t\}_{t\in\R}\subset\mathrm{Aut}(G)$ be a flow. By maximality, for any $t\in\R$ we get that $\varphi_t(T(G))=T(G)$. However, the group of the automorphisms of $T(G)$ is discrete and hence $\varphi_t|_{T(G)}=\id_{T(G)}$. In particular, it follows that 
$$T(G)\subset\fix(\varphi_t).$$

Let $\DC=\DC_{\EC}+\DC_{\HC}+\DC_{\NC}$ be the Jordan decomposition of $\DC$. By Theorem 3.2 and Proposition 3.3 of \cite{SM1} we have that $\DC_{\EC}, \DC_{\HC}\;\mbox{ and }\;\DC_{\NC}$ are still derivations of $\fg$. Moreover, if $\fg_{\lambda}$ stands for the eigenspaces of the hyperbolic part $\DC_{\HC}$, then $[\fg_{\lambda}, \fg_{\mu}]\subset\fg_{\lambda+\mu}$ if $\lambda+\mu$ is an eigenvalue of $\DC_{\HC}$ and zero otherwise (see \cite{SM1} Proposition 3.1). It turns out that 
$$\fg^+=\bigoplus_{\lambda>0}\fg_{\lambda}, \;\;\;\mbox{ and }\;\;\fg^-=\bigoplus_{\lambda<0}\fg_{\lambda}$$
are nilpotent Lie subalgebras. Denoting by $\fg^0=\ker\DC_{\HC}$ we obtain the decomposition $\fg=\fg^+\oplus\fg^0\oplus\fg^-$.

Now we extend the Jordan decomposition from a linear differential equation to linear vector fields. Let $\XC$ be a linear vector field on a connected Lie group $G$. We say $\XC$ is elliptic, hyperbolic or nilpotent, respectively, when its associated derivation $\DC$ is elliptic, hyperbolic or nilpotent. The Jordan decomposition of $\XC$ is given by 
$$\XC=\XC_{\EC}+\XC_{\HC}+\XC_{\NC}, \;\;\;\mbox{ where }\;\;\;\XC, \XC_{\EC}, \XC_{\HC}, \XC_{\NC}\;\;\mbox{ commute},$$
with $\XC_{\EC}$ elliptic, $\XC_{\HC}$ hyperbolic and $\XC_{\NC}$ nilpotent. By the uniqueness of the Jordan decomposition of $\DC$ and the connectedness of the group $G$ we get that the Jordan decomposition of $\XC$ is unique, when such decomposition exists. 

Let $G$ be a connected Lie group and $\XC$ a linear vector field with associated flow $\{\varphi_t\}_{t\in\R}$. If $\XC$ admits Jordan decomposition $\XC=\XC_{\EC}+\XC_{\HC}+\XC_{\NC}$ then 
$$\mbox{ for all }\;t\in\R,\;\varphi_t\;\mbox{ is a commutative product,}\;\;\varphi_t=\varphi_t^{\EC}\circ\varphi^{\HC}_t\circ\varphi^{\NC}_t,$$
where $\{\varphi^i_t\}_{t\in\R}$ is the flow of $\XC_i$ for $i=\EC, \HC, \NC$.  Moreover, for any $i, j\in\{\EC, \HC, \NC\}$ with $i\neq j$ we have that $\varphi_t^{i, j}:=\varphi_t^i\circ\varphi_t^j=\varphi_t^j\circ\varphi_t^i$ is the flow of the linear vector field $\XC_{i, j}:=\XC_i+\XC_j$ whose associated derivation is $\DC_{i, j}:=\DC_i+\DC_j$. 

\begin{remark}
	\label{conjugated}
	For any given linear vector field $\XC$ on $G$ and any surjective homomorphism $\pi:G\rightarrow H$ the family $\{\psi_t\}_{t\in\R}$ defined by the relation
	$$\pi\circ\varphi_t=\psi_t\circ\pi, \;\;\;t\in\R$$
	is a one-parameter group of automorphism of $H$ if and only if $\ker\pi$ is a $\varphi$-invariant subgroup. In particular, if $\YC$ stands for the linear vector field associated with $\{\psi_t\}_{t\in\R}$ then $\YC$ is elliptic, hyperbolic or nilpotent if $\XC$ is elliptic, hyperbolic or nilpotent, respectively.
\end{remark}

We define the {\it dynamical subgroups} of $G$ associated with the hyperbolical part of $\XC$ by
$$G^0=\fix(\varphi^{\HC}_t), \;\;G^+=\exp(\fg^+)\;\;\mbox{ and }\;\;G^-=\exp(\fg^-).$$

These subgroups where first defined in \cite{DS} without the use of Jordan decompositions. However, it is straightforward to see that their definitions are equivalent. The next proposition states the main properties of these dynamical subgroups. Its proof can be found at \cite[Proposition 2.9]{DS}.

\begin{proposition}
	\label{dynamical}
	It holds:
	\begin{enumerate}
		\item $G^0$ normalizes $G^+$ and $G^-$ and hence $G^{+, 0}:=G^+G^0$ and $G^{-, 0}:=G^-G^0$ are subgroups of $G$;
		\item $G^-\cap G^0=G^+\cap G^0=G^{-, 0}\cap G^+=G^{+, 0}\cap G^-=\{e\}$;
		\item The dynamical subgroups are closed in $G$; 
		\item If $G$ is solvable, then $G^0$ is connected and $G=G^-G^{+, 0}$. Moreover, $\fix(\varphi_t)\subset G^0.$
	\end{enumerate}
\end{proposition}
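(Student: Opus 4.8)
The plan is to run everything off the decomposition $\fg=\fg^{+}\oplus\fg^{0}\oplus\fg^{-}$ into eigenspaces of $\DC_{\HC}$, using the facts recalled above: $\fg^{\pm}$ are $\DC$- and $\DC_{\HC}$-invariant nilpotent subalgebras, so $G^{\pm}=\exp\fg^{\pm}$ is the connected subgroup with Lie algebra $\fg^{\pm}$, while $G^{0}=\fix(\varphi^{\HC}_{t})$ is a closed subgroup with Lie algebra $\bigcap_{t}\ker(\rme^{t\DC_{\HC}}-\id)=\ker\DC_{\HC}=\fg^{0}$. The key observation for (1) is that conjugation by an element of $G^{0}$ commutes with the flow $\varphi^{\HC}$: every automorphism $\psi$ of $G$ satisfies $\psi\circ C_{g}=C_{\psi(g)}\circ\psi$, so taking $\psi=\varphi^{\HC}_{t}$ and $g\in\fix(\varphi^{\HC}_{t})$ gives $\varphi^{\HC}_{t}\circ C_{g}=C_{g}\circ\varphi^{\HC}_{t}$; differentiating at $e$ and then in $t$ at $0$ yields $\DC_{\HC}\Ad(g)=\Ad(g)\DC_{\HC}$. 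Since $\DC_{\HC}$ is semisimple, $\Ad(g)$ preserves each eigenspace $\fg_{\lambda}$, hence $\Ad(g)\fg^{\pm}=\fg^{\pm}$, so $gG^{\pm}g^{-1}=\exp(\Ad(g)\fg^{\pm})=G^{\pm}$. Thus $G^{0}$ normalizes $G^{+}$ and $G^{-}$, and $G^{\pm}G^{0}$ are subgroups.

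For (3): $G^{0}=\bigcap_{t}\{g:\varphi^{\HC}_{t}(g)=g\}$ is closed as an intersection of closed sets, and I would get the closedness of $G^{\pm}=\exp\fg^{\pm}$ from the nilpotency of $\fg^{\pm}$ together with $\fg^{\pm}\cap\fg^{0}=0$ and $\mathrm{Lie}(T(G))\subseteq\fg^{0}$ (which holds because $\varphi_{t}$ fixes $T(G)$ pointwise): these force $\exp|_{\fg^{\pm}}$ to be an injective proper map — most cleanly after passing to $G/T(G)$, whose toral component is trivial — so $G^{\pm}$ is closed. For (2): each of the four intersections is a closed subgroup whose Lie algebra is one of $\fg^{\pm}\cap\fg^{0}$, $\fg^{+}\cap(\fg^{-}\oplus\fg^{0})$, $\fg^{-}\cap(\fg^{+}\oplus\fg^{0})$, all zero, so each is discrete; each is also $\varphi^{\HC}$-invariant (the factors $G^{\pm},G^{0}$ are, since $\fg^{\pm}$ is $\DC_{\HC}$-invariant and the $\varphi^{\HC}_{s}$ commute), hence a continuous $\varphi^{\HC}$-orbit inside it is a single point and the intersection lies in $\fix(\varphi^{\HC}_{t})=G^{0}$. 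It then remains to see $G^{\pm}\cap G^{0}=\{e\}$: if $g=\exp Y$ with $Y=\sum_{\lambda>0}Y_{\lambda}\in\fg^{+}$ is fixed by $\varphi^{\HC}$, then $g=\varphi^{\HC}_{t}(g)=\exp\big(\sum_{\lambda>0}\rme^{t\lambda}Y_{\lambda}\big)\to e$ as $t\to-\infty$, whence $g=e$; symmetrically for $G^{-}\cap G^{0}$ with $t\to+\infty$.

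For (4), assume $G$ solvable. Granting the two substantive claims — $G^{0}$ connected and $G=G^{-}G^{+,0}$ — the inclusion $\fix(\varphi_{t})\subseteq G^{0}$ is immediate: $G^{-},G^{0},G^{+,0}$ are $\varphi_{t}$-invariant (by $\DC$-invariance of $\fg^{\pm},\fg^{0}$ and commutativity of $\varphi_{t}$ with $\varphi^{\HC}_{s}$) and by (2) the factorization $g=g^{-}g^{+,0}$ is unique, so $\varphi_{t}(g)=g$ forces $\varphi_{t}(g^{-})=g^{-}$ and $\varphi_{t}(g^{+,0})=g^{+,0}$; then $g^{-}\in G^{-}$ being fixed with $\varphi_{t}(g^{-})\to e$ as $t\to+\infty$ gives $g^{-}=e$, and factoring $g^{+,0}=g^{+}g^{0}$ the same way (using $G^{+}\cap G^{0}=\{e\}$) kills $g^{+}$, leaving $g=g^{0}\in G^{0}$. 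For ``$G^{0}$ connected'' and ``$G=G^{-}G^{+,0}$'' the plan is induction on $\dim G$ through a $\varphi$-invariant closed connected normal subgroup $N$ of smaller dimension (for instance $N=\overline{[G,G]}$ or the nilradical): one checks both statements directly when $G$ is abelian (there $\exp$ is onto, $\ker\exp$ is discrete, and one reads off $G^{0}=\exp\fg^{0}$, $G^{\pm}=\exp\fg^{\pm}$, $G=\exp\fg^{-}\cdot\exp(\fg^{+}\oplus\fg^{0})$), and then propagates them along $1\to N\to G\to G/N\to1$. This extension step — transporting connectedness of $G^{0}$ and the product decomposition through the quotient, where one must know that the relevant $\varphi$-equivariant lifting obstructions vanish — is the main obstacle, and is exactly where solvability is used: for general $G$, item (4) fails (e.g. $G=G^{-}G^{+,0}$ would be a ``big cell'', not all of $G$, when $G$ is semisimple).
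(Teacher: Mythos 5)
The paper does not actually prove this proposition: it is quoted from \cite[Proposition 2.9]{DS}, so there is no internal argument to compare against. Judged on its own, your item (1) is correct and is the standard argument (for $g\in\fix(\varphi_t^{\HC})$ the identity $\varphi_t^{\HC}\circ C_g=C_g\circ\varphi_t^{\HC}$ forces $\Ad(g)$ to commute with $\DC_{\HC}$, hence to preserve each eigenspace $\fg_\lambda$, hence to preserve $\fg^{\pm}$ and $G^{\pm}$). Your reduction of item (2) to $G^{\pm}\cap G^{0}=\{e\}$ via discreteness and $\varphi^{\HC}$-invariance is also sound, granting (3) and granting that $G^{\pm,0}=G^{\pm}G^{0}$ is a Lie subgroup with countably many components, so that the mixed intersections really are zero-dimensional subgroups; that last point deserves a sentence rather than being folded into ``each of the four intersections is a closed subgroup.''

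The genuine gaps are in (3) and (4). For (3), closedness of $G^{\pm}=\exp(\fg^{\pm})$ is not a formal consequence of nilpotency of $\fg^{\pm}$ together with $\fg^{\pm}\cap\fg^{0}=0$: a connected subgroup with nilpotent (even abelian) Lie algebra can fail to be closed, as an irrational winding line on a torus shows, and your assertion that these hypotheses ``force $\exp|_{\fg^{\pm}}$ to be an injective proper map'' is precisely the statement to be proved, not a reason for it. Injectivity is cheap (the kernel of the covering $\widetilde{G^{\pm}}\rightarrow G^{\pm}$ is discrete and $\varphi^{\HC}$-invariant, hence pointwise fixed, hence trivial because $\varphi_t^{\HC}$ contracts $G^{+}$ to $e$ as $t\rightarrow-\infty$), but properness of the image needs a real argument, e.g.\ that the closure of a connected subgroup differs from it only by a torus, which would have to be $\varphi$-fixed with Lie algebra in $\fg^{0}$, contradicting $\DC_{\HC}$-invariance of $\mathrm{Lie}(\cl(G^{\pm}))$. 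For (4) you prove only the easy implication, namely $\fix(\varphi_t)\subset G^{0}$ \emph{assuming} that $G^{0}$ is connected and that $G=G^{-}G^{+,0}$; the two substantive claims are deferred to an induction whose inductive step you yourself label ``the main obstacle.'' That step --- transporting the connectedness of $G^{0}$ and the global product decomposition through an extension of $G$ by a $\varphi$-invariant normal subgroup --- is where essentially all of the work lies in \cite{DS}, so as written items (3) and (4) are outlined rather than proved.
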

We will say that $G$ is {\it decomposable} if $G=G^-G^{+, 0}$. In particular, if $G$ is decomposable and $x\in G$ there exists unique $a\in G^-$, $b\in G^+$ and $c\in G^0$ such that $x=abc$.

A {\it Levi subgroup (subalgebra)} $S\subset G$ $(\fs\subset \fg) $ is a maximal connected semisimple subgroup (subalgebra). By Levi's Theorem, Levi subgroups (subalgebras) always exists and we have the Levi decomposition (see \cite[Chapter I-4]{ALEB})
$$G=SR, \;\;\;\mbox{ with }\;\;\dim(S\cap R)=0,$$
where $R$ is the solvable radical of $G$. 

%We will say that a group $G$ has {\it semisimple finite center} if some (and hence all) Levi subgroup of $G$ has finite center.

\begin{proposition}
	\label{Levi}
	There exists a $\varphi^{\HC}$-invariant Levi subgroup $S\subset G$ such that 
	$$\fix(\varphi_t^{\HC})=\fix(\varphi_t^{\HC}|_S)\fix(\varphi_t^{\HC}|_R).$$
\end{proposition}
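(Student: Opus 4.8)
The plan is to first produce a Levi subgroup $S$ that is invariant under the hyperbolic flow $\{\varphi^{\HC}_t\}$, and then to read off the displayed equality from an elementary decomposition of fixed points along $G=SR$.

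The heart of the matter is to find a $\DC_{\HC}$-invariant Levi subalgebra $\fs\subset\fg$. The radical $\fr$ is a characteristic ideal, hence invariant under every $\mathrm{e}^{t\DC_{\HC}}$. Since $\DC_{\HC}$ is hyperbolic, $\{\mathrm{e}^{t\DC_{\HC}}:t\in\R\}$ is a commuting, simultaneously diagonalizable family of automorphisms of $\fg$; it is therefore a fully reducible group of automorphisms of $\fg$ (equivalently, its Zariski closure in $\mathrm{Aut}(\fg)$ is an algebraic torus), and it preserves $\fr$. By the theorem that a reductive group of automorphisms of a Lie algebra in characteristic $0$ stabilizes some Levi subalgebra — Mostow's theorem on the compatibility of the Levi decomposition with reductive group actions; alternatively, the Levi subalgebras form an affine homogeneous space under $\exp(\ad\fn)$, with $\fn$ the nilradical, on which a reductive group of automorphisms has a fixed point — there is a Levi subalgebra $\fs$ with $\mathrm{e}^{t\DC_{\HC}}\fs=\fs$ for all $t$, and hence $\DC_{\HC}\fs\subset\fs$. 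Let $S$ be the connected subgroup of $G$ with Lie algebra $\fs$: it is a Levi subgroup, $S\cap R$ is discrete because $\fs\cap\fr=0$, and $\varphi^{\HC}_t(S)=S$ for all $t$ since $(d\varphi^{\HC}_t)_e=\mathrm{e}^{t\DC_{\HC}}$ preserves $\fs$. Since $\fr$ is characteristic we also have $\varphi^{\HC}_t(R)=R$, so $\{\varphi^{\HC}_t|_S\}$ and $\{\varphi^{\HC}_t|_R\}$ are one-parameter groups of automorphisms of $S$ and $R$.

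The inclusion $\fix(\varphi^{\HC}_t|_S)\,\fix(\varphi^{\HC}_t|_R)\subset\fix(\varphi^{\HC}_t)$ is immediate because each $\varphi^{\HC}_t$ is a homomorphism. Conversely, let $g\in\fix(\varphi^{\HC}_t)$ and write $g=sr$ with $s\in S$, $r\in R$, using $G=SR$. Applying $\varphi^{\HC}_t$ and using $\varphi^{\HC}_t(g)=g$ yields
$$\varphi^{\HC}_t(s)^{-1}s=\varphi^{\HC}_t(r)\,r^{-1}\in S\cap R\qquad\text{for all }t\in\R,$$
the left-hand side lying in $S$ and the right-hand side in $R$. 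The curve $t\mapsto\varphi^{\HC}_t(s)^{-1}s$ is continuous, takes values in the discrete set $S\cap R$, and equals $e$ at $t=0$, hence is identically $e$; therefore $\varphi^{\HC}_t(s)=s$ and $\varphi^{\HC}_t(r)=r$ for all $t$, i.e. $s\in\fix(\varphi^{\HC}_t|_S)$ and $r\in\fix(\varphi^{\HC}_t|_R)$. Thus $g=sr\in\fix(\varphi^{\HC}_t|_S)\,\fix(\varphi^{\HC}_t|_R)$, which gives the reverse inclusion and hence the claimed equality.

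I expect the only genuine obstacle to be the first step — the construction of a $\varphi^{\HC}$-invariant Levi subgroup; once it is available, the rest is formal. If one prefers not to invoke Mostow's theorem, the invariant Levi subalgebra can be built by hand: $\DC_{\HC}$ induces on $\fg/\fr$ a semisimple, hence inner, derivation, and starting from an arbitrary Levi subalgebra $\fs_0$ one corrects it by an element of the nilradical, using $H^1(\fs_0,\fr)=0$ (Whitehead's lemma) to absorb the $\fr$-valued defect of $\DC_{\HC}|_{\fs_0}$ while retaining $\DC_{\HC}$-invariance; the bookkeeping in that variant is where the real work would lie.
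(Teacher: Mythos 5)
Your proposal is correct and follows essentially the same route as the paper: a $\DC_{\HC}$-invariant Levi subalgebra obtained from Mostow's theorem on fully reducible automorphism groups, followed by the observation that for $g=sr\in\fix(\varphi^{\HC}_t)$ the continuous curve $t\mapsto\varphi^{\HC}_t(s)^{-1}s=\varphi^{\HC}_t(r)r^{-1}$ lives in the discrete set $S\cap R$ and starts at $e$, hence vanishes identically. The only difference is cosmetic (your sketched Whitehead-lemma alternative for the invariant Levi subalgebra), so there is nothing to correct.
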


\begin{proof}
	Since $\{\rme^{t\DC_{\HC}}, \;t\in\R\}$ is a group of semisimple automorphisms of $\fg$,  Corollary 5.2 of \cite{Mostow} implies the existence of a $\DC_{\HC}$-invariant Levi subalgebra. Hence, the connected subgroup $S\subset G$ with Lie algebra $\fs$ is a Levi subgroup and the $\DC_{\HC}$-invariance of $\fs$ implies the $\varphi^{\HC}$-invariance of $S$.
	
	For the equality of the sets of fixed points, let us show the inclusion $\fix(\varphi_t^{\HC})\subset \fix(\varphi_t^{\HC}|_S)\fix(\varphi_t^{\HC}|_R)$ since the opposite one is trivial. Let $x\in\fix(\varphi_t^{\HC})$ and write it as $x=ab$ with $a\in S$, $b\in R$. Then,
	$$ab=x=\varphi_t(x)=\varphi_t^{\HC}(a)\varphi_t^{\HC}(b)\;\;\implies\;\; S\ni \varphi^{\HC}(a^{-1})a=\varphi^{\HC}_t(b)b^{-1}\in R.$$
	Since $\dim(R\cap S)=0$ and $t\mapsto \varphi^{\HC}_t(a^{-1})a$ is a continuous curve in $R\cap S$, we must necessarily have that $\varphi^{\HC}_t(a^{-1})a=\varphi^{\HC}_0(a^{-1})a=e$ implying that $a\in\fix(\varphi_t^{\HC}|_S)$ and also that $b\in\fix(\varphi_t^{\HC}|_{R})$ as desired. 
\end{proof}

\medskip
Let $\Delta\subset\fg$ be a subspace. A differentiable curve $\theta:[0, T]\rightarrow G$ is said to be {\it admissible} for $\Delta$ if  
$$\dot{\theta}(s)\in (dL_{\theta(s)})_e\Delta, \;\;\;\mbox{ almost everywhere.}$$

The next result implies that we can endow $G$ with a left-invariant Riemannian metric such that $\{\varphi_t^{\EC}\}_{t\in\R}$ is a flow of isometries.

\begin{theorem}
	\label{iso}
	Let $\Delta\subset\fg$ be a $\DC$-invariant subspace. If $\DC|_{\Delta}$ is skew-symmetric then $\{\varphi_t\}_{t\in\R}$ preserves the arc-lenght of any admissible curve for $\Delta$. In particular, $\{\varphi_t^{\EC}\}_{t\in\R}$ is a flow of isometries for some left-invariant metric on $G$.
\end{theorem}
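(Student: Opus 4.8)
The plan is to construct a left-invariant Riemannian metric on $G$ for which $\DC$ restricted to $\Delta$ is skew-symmetric, and then argue that admissibility is preserved by the flow. First I would recall that a left-invariant Riemannian metric on $G$ is completely determined by an inner product $\langle\cdot,\cdot\rangle$ on $\fg$, via $\langle u,v\rangle_g := \langle (dL_{g^{-1}})_g u, (dL_{g^{-1}})_g v\rangle$ for $u,v\in T_gG$. The key analytic fact I would use is the one already recorded in Section 2: since $\DC|_\Delta$ is elliptic-type (skew-symmetric for some inner product), there is an inner product on $\fg$ — built by averaging or by the structure of $\rme^{t\DC_{\EC}}$ being an isometry — making $\DC|_\Delta$ skew-symmetric, equivalently making $\rme^{t\DC}|_\Delta = \rme^{t\DC_{\EC}}|_\Delta$ orthogonal for every $t$ (here using that $\Delta$ is $\DC$-invariant and $\DC|_\Delta$ has only imaginary eigenvalues under the hypothesis, so $\DC|_\Delta = \DC_{\EC}|_\Delta$).

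Next I would set up the computation of arc-length. Let $\theta:[0,T]\to G$ be admissible for $\Delta$, so $\dot\theta(s) = (dL_{\theta(s)})_e\, v(s)$ with $v(s)\in\Delta$ for almost every $s$. The curve $\varphi_t\circ\theta$ has velocity $(d\varphi_t)_{\theta(s)}\dot\theta(s)$, and the crucial identity is the naturality relation $\varphi_t\circ L_g = L_{\varphi_t(g)}\circ\varphi_t$, which upon differentiating at $e$ gives $(d\varphi_t)_{g}\circ(dL_g)_e = (dL_{\varphi_t(g)})_e\circ(d\varphi_t)_e = (dL_{\varphi_t(g)})_e\circ\rme^{t\DC}$. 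Therefore the velocity of $\varphi_t\circ\theta$ at $s$ is $(dL_{\varphi_t(\theta(s))})_e\,\rme^{t\DC}v(s)$, and since $v(s)\in\Delta$ and $\Delta$ is $\DC$-invariant, $\rme^{t\DC}v(s)\in\Delta$ as well, so $\varphi_t\circ\theta$ is again admissible for $\Delta$. Its left-invariant speed at $s$ is then $\|\rme^{t\DC}v(s)\| = \|\rme^{t\DC_{\EC}}v(s)\| = \|v(s)\|$ because $\rme^{t\DC_{\EC}}$ is an isometry of the chosen inner product; integrating over $[0,T]$ gives equality of arc-lengths.

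For the final assertion, I would apply the above with $\Delta := \fg$ itself in the case $\DC = \DC_{\EC}$, or more precisely note that $\DC_{\EC}$ is skew-symmetric with respect to the inner product provided by the displayed statement in Section 2 ("there is an inner product $\langle\cdot,\cdot\rangle$ in $V$ such that $\rme^{tA_{\EC}}$ is an isometry"), applied to $V=\fg$, $A=\DC$. The associated left-invariant metric on $G$ then has $\{\varphi_t^{\EC}\}_{t\in\R}$ acting by isometries: every $g\in G$ is a product of exponentials, left-invariance reduces the isometry check to the differential at $e$, and $(d\varphi_t^{\EC})_e = \rme^{t\DC_{\EC}}$ preserves the inner product by construction.

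The main obstacle, and the point requiring care, is justifying that under the hypothesis "$\DC|_\Delta$ skew-symmetric" one genuinely has $\rme^{t\DC}|_\Delta$ orthogonal for the metric being used — i.e. reconciling the given inner product on $\Delta$ (for which $\DC|_\Delta$ is skew) with a single global inner product on all of $\fg$ that both restricts correctly to $\Delta$ and, for the last assertion, makes $\DC_{\EC}$ skew on all of $\fg$. One must also handle the measure-theoretic bookkeeping (admissibility holding only almost everywhere, $v(s)$ merely measurable) but that is routine once the pointwise identity $\|\rme^{t\DC}v(s)\| = \|v(s)\|$ is in hand.
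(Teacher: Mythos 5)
Your proposal is correct and follows essentially the same route as the paper: differentiate the naturality relation $\varphi_t\circ L_g=L_{\varphi_t(g)}\circ\varphi_t$ to reduce the speed of $\varphi_t\circ\theta$ to $(dL_{\varphi_t(\theta(s))})_e\,\rme^{t\DC}v(s)$ with $v(s)\in\Delta$, use orthogonality of $\rme^{t\DC}|_{\Delta}$ coming from skew-symmetry, and integrate; the last assertion is obtained exactly as in the paper by extending the inner product making $\DC_{\EC}$ skew-symmetric to a left-invariant metric. The only difference is your detour through $\DC|_{\Delta}=\DC_{\EC}|_{\Delta}$, which the paper avoids by working directly with $\rme^{t(\DC|_{\Delta})}$.
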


\begin{proof}
	Let us denote by $\langle.,.\rangle$ the left-invariant metric and by $l(t)$ the length of the curve $s\longmapsto \varphi_t(\theta(s))$. Thanks to the left-invariance on the one hand, and to the fact that $(d\varphi_t)_e=\rme^{t\DC}$ is orthogonal on $\Delta$ for all $t$ on the other one, we get:
	$$\hspace{-9cm} l(t)=\int_{0}^{T} \left\langle \dds\varphi_t(\theta(s)),\dds\varphi_t(\theta(s)) \right\rangle^\frac{1}{2}_{\varphi_t(\theta(s))} \ ds$$
	$$\hspace{-7.2cm}= \int_{0}^{T}  \left\langle (d\varphi_t)_{\theta(s)}\dds \theta(s),(d\varphi_t)_{\theta(s)}\dds \theta(s)\right\rangle^\frac{1}{2}_{\varphi_t(\theta(s))} \ ds$$
	$$= \int_{0}^{T}  \left\langle (dL_{\varphi_t^e(\theta(s))})_e(d\varphi_t)_e(dL_{\theta(s)^{-1}})_{\theta(s)}\dds \theta(s),(dL_{\varphi_t^e(\theta(s))})_e(d\varphi_t)_e(dL_{\theta(s)^{-1}})_{\theta(s)}\dds \theta(s) \right\rangle^\frac{1}{2}_{\varphi_t(\theta(s))} \ ds$$
	$$\hspace{-9.2cm}=\int_{0}^{T}  \left\langle \dds \theta(s),\dds \theta(s) \right\rangle^\frac{1}{2}_{\theta(s)} \ ds = l(0),$$
	where for the third equality we used that $\varphi_t\circ L_g=L_{\varphi_t(g)}\circ\varphi_t$ for any $g\in G$ and $t\in\R$.
	
	The assertion on $\{\varphi_t^{\EC}\}_{t\in\R}$ follows from the fact that there exists an inner product $\langle\cdot, \cdot \rangle$ on $\fg$ such that $\{\rme^{t\DC_{\EC}}\}_{t\in\R}$ is a flow of isometries, or equivalently, $\DC_{\EC}$ is skew-symmetric. By extending $\langle\cdot, \cdot \rangle$ to a left-invariant metric on $G$ we have that $\{\varphi^{\EC}_t\}$ preserves the arc-lenght of any curve in $G$ and the result follows.
\end{proof}

\begin{proposition}
	\label{aff}
	It holds:
	\begin{enumerate}
		\item The (positive) orbit of $\varphi_t$ at $x\in G$ is bounded iff the (positive) orbit of $\varphi_t^{\HC, \NC}$ at $x\in G$ is bounded;
		\item If $x\in\RC(\varphi_t)$ then  $\cl(\OC(x, \varphi))=\cl(\OC^+(x, \varphi)).$
	\end{enumerate}
\end{proposition}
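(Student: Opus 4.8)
The plan is to treat the two assertions separately: item~1 through the isometric character of the elliptic flow, and item~2 as a soft topological fact valid for any flow.

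For item~1, write $\varphi_t=\varphi_t^{\EC}\circ\varphi_t^{\HC,\NC}$ for the commutative factorization coming from the Jordan decomposition of $\XC$, and fix, via Theorem~\ref{iso}, a left-invariant Riemannian metric $d$ on $G$ for which $\{\varphi_t^{\EC}\}_{t\in\R}$ acts by isometries. Each $\varphi_t^{\EC}$ is an automorphism, hence fixes $e$, so that
$$d(\varphi_t(x),e)=d\bigl(\varphi_t^{\EC}(\varphi_t^{\HC,\NC}(x)),\varphi_t^{\EC}(e)\bigr)=d(\varphi_t^{\HC,\NC}(x),e),\qquad t\in\R,\ x\in G.$$
Since a left-invariant metric on a connected Lie group is complete, a subset of $G$ is relatively compact iff it lies in a $d$-ball centred at $e$; the displayed identity then gives at once that $\OC^+(x,\varphi)$ is bounded iff $\OC^+(x,\varphi^{\HC,\NC})$ is, and likewise for the two-sided orbits $\OC(x,\varphi)$ and $\OC(x,\varphi^{\HC,\NC})$.

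For item~2 the inclusion $\cl(\OC^+(x,\varphi))\subset\cl(\OC(x,\varphi))$ is automatic, so it suffices to show $\varphi_t(x)\in\cl(\OC^+(x,\varphi))$ for every $t\le 0$. By recurrence choose $t_k\to+\infty$ with $\varphi_{t_k}(x)\to x$. For fixed $t\le 0$ and $k$ large we have $t_k+t\ge 0$, whence $\varphi_{t_k+t}(x)=\varphi_t(\varphi_{t_k}(x))\in\OC^+(x,\varphi)$, and continuity of $\varphi_t$ forces $\varphi_t(\varphi_{t_k}(x))\to\varphi_t(x)$; therefore $\varphi_t(x)\in\cl(\OC^+(x,\varphi))$, and the two closures coincide.

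I do not expect a genuine obstacle here. The argument for item~2 is purely formal, and the only point in item~1 deserving care is the reading of ``bounded'': I would fix once and for all the metric supplied by Theorem~\ref{iso} and invoke completeness, so that ``bounded'' coincides with ``relatively compact'' and is transferred verbatim by the distance identity above. Apart from this, everything rests on the commutative factorization $\varphi_t=\varphi_t^{\EC}\circ\varphi_t^{\HC,\NC}$ and on $\varphi_t^{\EC}(e)=e$.
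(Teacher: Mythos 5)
Your proposal is correct and follows essentially the same route as the paper: item 1 via the identity $\varrho(\varphi_t(x),e)=\varrho(\varphi_t^{\HC,\NC}(x),e)$ obtained from the isometric elliptic factor of Theorem~\ref{iso}, and item 2 by the standard recurrence argument pushing $\varphi_{t_k}(x)\to x$ forward by $\varphi_t$ for $t\le 0$. The only addition is your explicit appeal to completeness to equate boundedness with relative compactness, a point the paper leaves implicit.
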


\begin{proof}
	1. Let us denote by $\varrho$ the distance associated with the left-invariant metric such that $\{\varphi_t^{\EC}\}$ is a flow of isometries. Then, 
	$$\mbox{ for all }t\in\R, \;\;\varrho(\varphi^{\HC, \NC}_t(x), e)=\varrho(\varphi^{\EC}_t(\varphi_t^{\HC, \NC}(x)), \varphi_t^{\EC}(e))=\varrho(\varphi_t(x), e),$$
	which implies the result.
	
	2. Since $\OC^+(x, \varphi)\subset\OC(x, \varphi)$ it is enough to show that $\varphi_{-t}(x)\in\cl(\OC^+(x, \varphi))$ for any $t>0$. However, since $x\in\RC(\varphi_t)$ there exists $t_k\rightarrow+\infty$ such that $\varphi_{t_k}(x)\rightarrow x$ as $k\rightarrow+\infty$ and so
	$$\varphi_{t_k-t}(x)=\varphi_{-t}(\varphi_{t_k}(x))\rightarrow\varphi_{-t}(x), \;\;k\rightarrow+\infty.$$
	Since $t_k\rightarrow+\infty$, there exists $k_0\in\N$ such that $t_k-t>0$ for any $k\geq k_0$ and so $\varphi_{t_k-t}(x)\in\OC^+(x, \varphi)$ implying that $\varphi_{-t}(x)\in\cl(\OC^+(x, \varphi))$ as stated.
\end{proof}

%Let $\pi:G\rightarrow H$ be a surjective homomorphism between connected Lie group. Two linear vector field $\XC^G$ and $\XC^H$ on $G$ and $H$, respectively. We say that $\XC^G$ and $\XC^H$ are $\pi$-related if their respective flows ${\varphi_t^G}_{t\in\R}$ and ${\varphi_t^H}_{t\in\R}$ satisfy
%$$\pi\circ\varphi^G_t=\psi^H_t\circ\pi, \;\;\;t\in\R.$$

\section{Recurrent points of linear vector fields}

Our aim in this section is to prove the following fact: If a linear vector field admits a Jordan decomposition, the set of recurrent points coincides with the intersection of the sets of fixed points of the hyperbolic and nilpotent parts of the mentioned decomposition. 

Let us fix a linear vector field $\XC$ and assume through the whole section that $\XC$ admits a Jordan decomposition. We start by showing the result in question on the adjoint group.

\begin{proposition}
	\label{previousLemma}
	Let $\XC$ be a linear vector field on $G$ with associated derivation $\DC$. Then,
	\begin{equation}
	\label{Ad}
	\Ad(\varphi_t(g))=\rme^{t\ad(\DC)}\Ad(g), \;\;g\in G, t\in\R.
	\end{equation}
	In particular,
	$$\RC\left(\Ad\circ\varphi_t\right)=\fix\left(\Ad\circ\varphi_t^{\HC, \NC}\right)=\fix\left(\Ad\circ\varphi_t^{\HC}\right)\cap\fix\left(\Ad\circ\varphi_t^{\NC}\right),$$
	where $\Ad\circ\varphi_t(\Ad(g)):=\Ad(\varphi_t(g))$.

\end{proposition}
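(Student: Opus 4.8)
The plan is to first establish the conjugation-type identity \eqref{Ad}, then reduce the statement about recurrence of $\Ad\circ\varphi_t$ to Remark~\ref{a} applied to the linear map $\ad(\DC)$ on the vector space $\mathfrak{gl}(\fg)$ (or, more precisely, on the subspace $\operatorname{ad}(\fg)\subset\mathfrak{gl}(\fg)$, which is where the $\Ad$-orbit lives). For the identity, I would differentiate: the map $g\mapsto\Ad(\varphi_t(g))$ and the map $g\mapsto\rme^{t\ad(\DC)}\Ad(g)$ are both homomorphisms $G\to\operatorname{Aut}(\fg)$, so by connectedness it suffices to check they agree to first order at $e$, i.e. that their differentials at $e$ coincide. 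Using $(d\varphi_t)_e=\rme^{t\DC}$ together with the standard formula $d(\Ad)_e=\ad$ and the identity $\rme^{t\DC}\ad(Y)\rme^{-t\DC}=\ad(\rme^{t\DC}Y)$, the left-hand side has differential $Y\mapsto \rme^{t\DC}\ad(Y)\rme^{-t\DC}$ acting correctly; conjugating $\Ad$ this way is exactly $\rme^{t\ad(\DC)}$ by Remark~\ref{ad}'s observation that $\ad(\rme^{t\DC})=\rme^{t\ad(\DC)}$ as an operator on $\mathfrak{gl}(\fg)$. Alternatively, and perhaps more cleanly, one uses $\varphi_t\circ C_g=C_{\varphi_t(g)}\circ\varphi_t$, differentiates at $e$, and reads off $\rme^{t\DC}\circ\Ad(g)=\Ad(\varphi_t(g))\circ\rme^{t\DC}$, hence $\Ad(\varphi_t(g))=\rme^{t\DC}\Ad(g)\rme^{-t\DC}=\rme^{t\ad(\DC)}\Ad(g)$.

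With \eqref{Ad} in hand, observe that the curve $t\mapsto\Ad(\varphi_t(g))$ is precisely the trajectory of the point $\Ad(g)$ under the linear flow $\rme^{t\ad(\DC)}$ on the finite-dimensional vector space $\mathfrak{gl}(\fg)$. Therefore $\Ad(g)\in\RC(\Ad\circ\varphi_t)$ means exactly $\Ad(g)\in\RC(\rme^{t\ad(\DC)})$, and similarly for fixed points. By Remark~\ref{ad}, the Jordan decomposition of $\ad(\DC)$ is $\ad(\DC_{\EC})+\ad(\DC_{\HC})+\ad(\DC_{\NC})$, i.e. $(\ad\DC)_{\HC}=\ad(\DC_{\HC})$ and $(\ad\DC)_{\NC}=\ad(\DC_{\NC})$; and again by \eqref{Ad} applied to $\varphi_t^{\HC}$ and $\varphi_t^{\NC}$ (whose derivations are $\DC_{\HC},\DC_{\NC}$), we have $\Ad(\varphi_t^{\HC}(g))=\rme^{t\ad(\DC_{\HC})}\Ad(g)$ and likewise for $\NC$. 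Hence the operators $\rme^{t(\ad\DC)_{\HC}}$ and $\rme^{t(\ad\DC)_{\NC}}$ from Remark~\ref{a} are identified with $\Ad\circ\varphi_t^{\HC}$ and $\Ad\circ\varphi_t^{\NC}$ on the $\Ad$-image. Now Remark~\ref{a} gives
$$\RC(\rme^{t\ad(\DC)})=\fix(\rme^{t\ad(\DC)_{\HC}})\cap\fix(\rme^{t\ad(\DC)_{\NC}}),$$
and translating each set back through $\Ad$ yields the claimed equality of recurrent and fixed-point sets for $\Ad\circ\varphi_t$.

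One small technical point to handle carefully: $\Ad:G\to\operatorname{GL}(\fg)$ need not be injective (its kernel is $Z(G)$), so a priori "$\RC(\Ad\circ\varphi_t)$" as a subset of $\Ad(G)$ versus "$\RC$ of the restriction of the linear flow $\rme^{t\ad\DC}$ to the $\ad(\DC)$-invariant subset $\Ad(G)$" must be matched. But this is benign: Remark~\ref{a} is a statement about the ambient linear flow on all of $\mathfrak{gl}(\fg)$, and the sets $\RC$, $\fix$ of that flow intersected with the invariant subset $\Ad(G)$ are exactly the sets appearing in the proposition, because $\Ad(G)$ is $\rme^{t\ad\DC}$-invariant by \eqref{Ad} and the notions of recurrence and fixedness are intrinsic to trajectories lying in it. So the only genuine work is the differentiation argument establishing \eqref{Ad}; the rest is bookkeeping translating Remark~\ref{a} and Remark~\ref{ad} to this setting. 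I expect the verification of \eqref{Ad} — specifically getting the conjugation $\rme^{t\DC}\Ad(g)\rme^{-t\DC}$ to collapse to $\rme^{t\ad(\DC)}\Ad(g)$ via $\ad(\rme^{t\DC})=\rme^{t\ad(\DC)}$ — to be the main (though routine) step.
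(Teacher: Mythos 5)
Your proposal is correct and follows essentially the same route as the paper: establish (\ref{Ad}) by connectedness (the paper computes directly on $\exp(\fg)$ using $\rme^{\ad(\rme^{t\DC}X)}=\rme^{t\DC}\rme^{\ad(X)}\rme^{-t\DC}$, which is the same identity your differentiation argument rests on), then identify $\Ad\circ\varphi_t$ with the restriction of the linear flow $\rme^{t\ad(\DC)}$ to $\Ad(G)$ and invoke Remarks \ref{a} and \ref{ad}. The only cosmetic difference is that the paper obtains the inclusion $\fix(\Ad\circ\varphi_t^{\HC})\cap\fix(\Ad\circ\varphi_t^{\NC})\subset\RC(\Ad\circ\varphi_t)$ by citing the isometry of $\rme^{t\ad(\DC_{\EC})}$ via Theorem \ref{iso}, whereas you get it from your direct matching of recurrent and fixed-point sets under restriction to the invariant subset $\Ad(G)$; both are fine.
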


\begin{proof}
	Since $G$ is connected, we only need to show equation (\ref{Ad}) for $g=\rme^X$ with $X\in\fg$. However, 	
	$$\Ad(\varphi_t(\exp(X)))=\Ad(\exp(\rme^{t\DC}X))=\rme^{\ad\left(\rme^{t\DC}X\right)}=\rme^{\rme^{t\DC}\ad(X)\rme^{-t\DC}}=\rme^{t\DC}\rme^{\ad(X)}\rme^{-t\DC}=C_{\rme^{t\DC}}\Ad(\exp(X)).$$
	Furthermore, since $C_{\rme^{t\DC}}:\mathfrak{gl}(\fg)\rightarrow \mathfrak{gl}(\fg)$ is linear, it holds that 
	$$C_{\rme^{t\DC}}=\Ad(\rme^{t\DC})=\rme^{t\ad(\DC)}\;\;\implies\;\;\Ad(\varphi_t(\exp(X)))=\rme^{t\ad(\DC)}\Ad(\exp(X)),$$
	which proves equation (\ref{Ad}).
	
	By equation (\ref{Ad}) it turns out that $\Ad\circ\varphi_t=\rme^{t\DC}|_{\Ad(G)}$ implying 
	$$\RC\left(\Ad\circ\varphi_t\right)\subset\RC\left(\rme^{t\ad(\DC)}\right)\cap\Ad(G).$$
	On the other hand, $\ad(\DC)$ is a linear map of $\mathfrak{gl}(\fg)$ with Jordan decomposition $\ad(\DC)=\ad(\DC_{\EC})+\ad(\DC_{\HC})+\ad(\DC_{\NC})$ which by Remarks \ref{a} and  \ref{ad} implies that
	$$\RC\left(\rme^{t\ad(\DC)}\right)=\fix\left(\rme^{t\ad(\DC_{\HC, \NC})}\right)=\fix\left(\rme^{t\ad(\DC_{\HC})}\right)\cap\fix\left(\rme^{\ad(\DC_{\NC})}\right).$$ 
	Since 
	$$\fix\left(\rme^{t\ad(\DC_{\HC, \NC})}\right)\cap\Ad(G)=\fix\left(\Ad\circ\varphi_t^{\HC, \NC}\right) \;\;\mbox{ and }\;\;$$
	$$\fix\left(\rme^{t\DC_{\HC}}\right)\cap\fix\left(\rme^{t\DC_{\NC}}\right)\cap\Ad(G)=\fix\left(\Ad\circ\varphi_t^{\HC}\right)\cap\fix\left(\Ad\circ\varphi_t^{\NC}\right),$$
	we get that 
	$$\RC\left(\Ad\circ\varphi_t\right)\subset \fix\left(\Ad\circ\varphi_t^{\HC, \NC}\right)=\fix\left(\Ad\circ\varphi_t^{\HC}\right)\cap\fix\left(\Ad\circ\varphi_t^{\NC}\right)\subset \RC\left(\Ad\circ\varphi_t\right),$$
	where the last inclusion follows from the fact that by Theorem \ref{iso}, $\Ad\circ\varphi^{\mathcal{E}}=\rme^{t\ad(\DC_{\mathcal{E}})}$ is an isometry, for any $t\in \R$.
\end{proof}

\bigskip

Furthermore, it is possible to prove something more

\begin{proposition}
	\label{fixx}
	With the previous notations, it holds that 
	$$\fix(\varphi_t^{\HC, \NC})=\fix(\varphi_t^{\HC})\cap\fix(\varphi_t^{\NC}).$$
\end{proposition}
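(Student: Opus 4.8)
The inclusion $\fix(\varphi_t^{\HC})\cap\fix(\varphi_t^{\NC})\subseteq\fix(\varphi_t^{\HC,\NC})$ is immediate from $\varphi_t^{\HC,\NC}=\varphi_t^{\HC}\circ\varphi_t^{\NC}$. For the converse it suffices to prove that $x\in\fix(\varphi_t^{\HC,\NC})$ forces $x\in\fix(\varphi_t^{\HC})$: since $\varphi_t^{\NC}=(\varphi_t^{\HC})^{-1}\circ\varphi_t^{\HC,\NC}$, this then gives $\varphi_t^{\NC}(x)=\varphi_{-t}^{\HC}(x)=x$ as well. So fix $x\in\fix(\varphi_t^{\HC,\NC})$ and set $z(t):=\varphi_t^{\HC}(x)x^{-1}$ and $w(t):=\varphi_t^{\NC}(x)x^{-1}$. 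Applying $\Ad$ to $\varphi_t^{\HC,\NC}(x)=x$ and using equation (\ref{Ad}) for $\varphi^{\HC,\NC}$ together with Remarks \ref{ad} and \ref{a}, one gets that $\Ad(x)$ is fixed by $\rme^{t\,\ad(\DC_{\HC})}$ and by $\rme^{t\,\ad(\DC_{\NC})}$; invoking (\ref{Ad}) once more for $\varphi^{\HC}$ and for $\varphi^{\NC}$ this means $\Ad(\varphi_t^{\HC}(x))=\Ad(\varphi_t^{\NC}(x))=\Ad(x)$, i.e. $z(t),w(t)\in\ker\Ad=Z(G)$, and by continuity $z(t),w(t)\in Z(G)_{0}$ for all $t$.

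Next I would confine $z$ to a subgroup annihilated by both derivations. From $\varphi_{t+s}^{\HC}=\varphi_t^{\HC}\circ\varphi_s^{\HC}$ and $\varphi_{t+s}^{\NC}=\varphi_t^{\NC}\circ\varphi_s^{\NC}$ one obtains the cocycle identities $z(t+s)=\varphi_t^{\HC}(z(s))z(t)$ and $w(t+s)=\varphi_t^{\NC}(w(s))w(t)$, while $x\in\fix(\varphi_t^{\HC,\NC})$ gives $w(t)=z(-t)$. Feeding the last relation into the second cocycle and comparing with the first yields $\varphi_t^{\HC}(z(s))=\varphi_{-t}^{\NC}(z(s))$ for all $t,s$. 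Let $H\subseteq Z(G)_{0}$ be the closed subgroup generated by $\{z(s):s\in\R\}$. The first cocycle shows $\varphi_t^{\HC}(z(s))=z(t+s)z(t)^{-1}\in H$, so $H$ is $\varphi^{\HC}$-invariant, and then the displayed identity makes it $\varphi^{\NC}$-invariant; since $\varphi_t^{\HC}|_H$ and $\varphi_{-t}^{\NC}|_H$ agree on a generating set of $H$ they coincide, and differentiating at $t=0$ gives $\DC_{\HC}|_{\fh}=-\DC_{\NC}|_{\fh}$ on $\fh:=\operatorname{Lie}(H)$. As $\DC_{\HC}|_{\fh}$ is semisimple with real spectrum and $\DC_{\NC}|_{\fh}$ is nilpotent, both must vanish. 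Hence $\fh\subseteq\ker\DC_{\HC}$, so $\varphi^{\HC}$ fixes $H$ pointwise and $z(t)$ is a continuous homomorphism into the abelian group $H$, whence $z(t)=\exp(tu)$ with $u\in\fz(\fg)\cap\ker\DC_{\HC}\cap\ker\DC_{\NC}$. In particular $z(t)\in Z(G)_{0}\cap\fix(\varphi_t^{\HC})\subseteq R^{0}:=\fix(\varphi_t^{\HC}|_R)$, where $R$ denotes the solvable radical, and $z(t)$ is central in $G$.

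Finally I would reduce the question to the radical. By Proposition \ref{Levi} there is a $\varphi^{\HC}$-invariant Levi subgroup $S$ with $G=SR$, $\dim(S\cap R)=0$ and $\fix(\varphi_t^{\HC})=\fix(\varphi_t^{\HC}|_S)\fix(\varphi_t^{\HC}|_R)$. Write $x=sr$, $s\in S$, $r\in R$. Since $z(t)$ is central and lies in $R$, $\varphi_t^{\HC}(x)=z(t)x=s(z(t)r)$; comparing with $\varphi_t^{\HC}(x)=\varphi_t^{\HC}(s)\varphi_t^{\HC}(r)$, the element $s^{-1}\varphi_t^{\HC}(s)$ lies in $S\cap R$, is trivial at $t=0$, and hence (discreteness and continuity) is trivial for all $t$, so $\varphi_t^{\HC}(s)=s$ and $\varphi_t^{\HC}(r)=z(t)r$. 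By Proposition \ref{dynamical} the solvable group $R$ is decomposable, so write $r=abc$ uniquely with $a\in R^{-}$, $b\in R^{+}$, $c\in R^{0}$. Then $\varphi_t^{\HC}(r)=\varphi_t^{\HC}(a)\varphi_t^{\HC}(b)c$, while $z(t)r=ab(cz(t))$ with $cz(t)\in R^{0}$ (using $z(t)\in R^{0}$ central in $R$); uniqueness of the decomposition forces $\varphi_t^{\HC}(a)=a$, $\varphi_t^{\HC}(b)=b$ and $cz(t)=c$, so $z(t)=e$. Since $R^{-}\cap R^{0}=R^{+}\cap R^{0}=\{e\}$ by Proposition \ref{dynamical}, we get $a=b=e$, hence $r=c\in\fix(\varphi_t^{\HC}|_R)$ and $x=sc\in\fix(\varphi_t^{\HC})$; by the first paragraph $x\in\fix(\varphi_t^{\HC})\cap\fix(\varphi_t^{\NC})$, as desired.

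The main obstacle is the second paragraph: a priori the ``drift'' $z(t)$ could run along a nontrivial one-parameter subgroup of $Z(G)_{0}$ on which $\DC_{\HC}$ — and even $\DC$ — acts nontrivially, and it is precisely the clash between the hyperbolic cocycle for $z$ and the nilpotent cocycle for $t\mapsto z(-t)$ (a group-level analogue of Lemma \ref{gamma}) that confines $z$ to $\ker\DC_{\HC}\cap\ker\DC_{\NC}$; once this is secured, the rigidity of the decomposition $R=R^{-}R^{+,0}$ finishes the argument. One could alternatively pass first to the universal cover of $G$, where $Z(G)_{0}$ is simply connected, but this is not needed for the argument above.
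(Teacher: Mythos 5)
Your proof is correct, and while it ends with the same Levi--plus--radical reduction as the paper, the way you control the central drift is genuinely different. The paper proves the nontrivial inclusion by cases: for $G$ semisimple it uses discreteness of $Z(G)$; for $G$ solvable it quotes item 4 of Proposition \ref{dynamical} to get $\fix(\varphi^{\HC,\NC}_t)\subset G^0=\fix(\varphi^{\HC}_t)$ in one line; and in the general case it projects to the semisimple quotient $G/R$, splits $x=ab$ along a $\varphi^{\HC}$-invariant Levi subgroup, and then pushes $b^{-1}\varphi^{\HC}_t(b)$ into $R^0$ via the step $\fix(\varphi_t^{\HC,\NC})_0=\left(\fix(\varphi_t^{\HC})\cap\fix(\varphi_t^{\NC})\right)_0$, which is stated rather tersely there. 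You instead run a single uniform argument: centrality of $z(t)=\varphi^{\HC}_t(x)x^{-1}$ comes from the $\Ad$-computation of Proposition \ref{previousLemma} together with Remarks \ref{a} and \ref{ad}, and then the two cocycle identities combined with $w(t)=z(-t)$ give $\varphi^{\HC}_t=\varphi^{\NC}_{-t}$ on the closed connected subgroup $H$ generated by the drift, so that on $\fh$ the semisimple operator $\DC_{\HC}$ equals the nilpotent operator $-\DC_{\NC}$ and both vanish. This is a clean group-level substitute for the paper's case analysis, and it identifies the drift explicitly as a one-parameter subgroup of $Z(G)_0\cap\fix(\varphi^{\HC}_t)\cap\fix(\varphi^{\NC}_t)$; the final appeal to the uniqueness of the decomposition $r=abc$ in $R=R^-R^+R^0$ then kills it. Two small points deserve a sentence each in a polished write-up: that $H$ is connected (it is generated by a connected set through $e$), so $\fh\subset\ker\DC_{\HC}$ really does force $\varphi^{\HC}_t$ to fix all of $H$ pointwise; and that $Z(G)_0\subset R$, which you use implicitly when placing $z(t)$ in $R^0$. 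Both are routine, so the argument stands.
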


\begin{proof} Our proof consider the different classes of Lie groups as follows: 
	
	$\bullet$\; Assume first that $G$ is a semisimple Lie group;
	
	By Proposition \ref{previousLemma} it follows that 
	$$x\in \fix(\varphi^{\HC, \NC})\;\;\implies\;\;\forall t\in\R, \;\; x^{-1}\varphi^{i}_t(x)\in Z(G), \; i=\HC, \NC.$$
	Since the center of a semisimple Lie group is discrete we must have $x^{-1}\varphi^{i}_t=e$ for all $t\in\R$. Therefore,  
	$$\fix(\varphi^{\HC, \NC})\subset\fix(\varphi^{\HC})\cap\fix(\varphi^{\NC}),$$ 
	which implies the equality.
	
	$\bullet$\; Let us assume now that $G$ is a solvable Lie group;
	
	Since the corresponding dynamical subgroups of $\varphi^{\HC, \NC}$ and $\varphi^{\HC}$ coincides, we have by Proposition \ref{dynamical} item 4. that 
	$$\fix(\varphi^{\HC, \NC})\subset G^0=\fix(\varphi^{\HC})\;\;\;\implies\;\;\;\fix(\varphi^{\HC, \NC})\subset \fix(\varphi^{\HC})\cap\fix(\varphi^{\NC}).$$
	
	$\bullet$\; Finally, let $G$ be an arbitrary Lie group.
	
	Let $S$ be a $\varphi^{\HC}$-invariant Levi subgroup given by Proposition \ref{Levi} and consider $\pi:G\rightarrow G/R$ be the canonical projection. Denote by $\bar{\varphi}^{\HC, \NC}, \bar{\varphi}^{\HC}$ and $\bar{\varphi}^{\HC}$ the flows induced by $\varphi^{\HC, \NC}, \varphi^{\HC}$ and $\varphi^{\NC}$ on $G/R$, respectively. Since $G/R$ is semisimple, we obtain that
	$$\pi(\fix(\varphi_t^{\HC, \NC}))\subset \fix(\bar{\varphi}_t^{\HC, \NC})= \fix(\bar{\varphi}_t^{\HC})\cap\fix(\bar{\varphi}_t^{\NC}).$$
	Let then $x\in\fix(\varphi^{\HC, \NC}_t)$ and write it as $x=ab$ with $a\in S$ and $b\in R$. By the $\varphi^{\HC}$-invariance of $S$ and $R$ we get $$\varphi^{\HC}_t(x)=\varphi^{\HC}_t(ab)=\varphi_t^{\HC}(a)\varphi^{\HC}_t(b)\;\;\implies\;\;\pi(\varphi^{\HC}_t(x))=\pi(\varphi^{\HC}(a))$$
	and hence
	$$\pi(a)=\pi(x)=\bar{\varphi}_t^{\HC}(\pi(x))=\pi(\varphi_t^{\HC}(x))=\pi(\varphi^{\HC}_t(a))\;\;\implies\;\;S\ni a^{-1}\varphi_t^{\HC}(a)\in R.$$
	Since $\dim(R\cap S)=0$ and $t\mapsto a^{-1}\varphi_t^{\HC}(a)$ is a continuous curve, it follows that $a\in\fix(\varphi^{\HC}_t)$. On the other hand, 
	$$\fix(\varphi_t^{\HC, \NC})_0=\left(\fix(\varphi_t^{\HC})\cap\fix(\varphi^{\HC}_t)\right)_0\;\;\implies\;\;x^{-1}\varphi_t^{\HC}(x)\in \fix(\varphi_t^{\HC})\;\;\implies\;\;b^{-1}\varphi^{\HC}_t(b)\in \fix(\varphi_t^{\HC})\cap R\subset R^0.$$
	Let us write $\varphi_t^{\HC}(b)=bc_t$ with $c_t\in R^0$. Since $b\in R$ and $R$ is solvable, Proposition \ref{dynamical} implies the existence of unique $p\in R^-$, $q\in R^+$ and $r\in R^0$ such that $b=pqr$. Hence, 
	$$\varphi_t^{\HC}(pqr)=pqrc_t\;\;\;\implies\;\;\; \varphi^{\HC}_t(p)=p, \;\; \varphi^{\HC}_t(q)=q, \;\;\varphi^{\HC}_t(r)=rc_t.$$
	However, $\fix(\varphi_t^{\HC}|_{R})=R^0$ implying that $p=q=e$ and consequently that $b=r$. In particular, we get that $b\in R^0=\fix(\varphi^{\HC}_t|_R)$ and by Proposition \ref{Levi} we conclude that 
	$$x=ab\in\fix(\varphi_t^{\HC}|_S)\fix(\varphi_t^{\HC}|_R)=\fix(\varphi_t^{\HC}).$$ 
	Since $\fix(\varphi_t^{\HC, \NC})\cap\fix(\varphi^{\HC}_t)=\fix(\varphi^{\NC}_t)$ we get that $x\in\fix(\varphi^{\HC})\cap\fix(\varphi^{\NC}_t)$ implying the equality
	$$\fix(\varphi_t^{\HC, \NC})=\fix(\varphi^{\HC})\cap\fix(\varphi^{\NC}_t),$$
	and concluding the proof.
\end{proof}

\bigskip

Next, we prove our main result concerning recurrent points. We build the proof by considering the cases where toral component of $G$ is trivial or not.

\begin{theorem}
	\label{fixedpoints}
	With the previous notations, it holds that
	$$\RC(\varphi_t)=\fix(\varphi_t^{\HC, \NC})=\fix(\varphi_t^{\HC})\cap\fix(\varphi_t^{\NC}).$$
\end{theorem}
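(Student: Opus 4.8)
The plan is to establish the two inclusions of $\RC(\varphi_t)=\fix(\varphi_t^{\HC,\NC})$, using Proposition \ref{fixx} for the second equality for free. The easy inclusion is $\fix(\varphi_t^{\HC,\NC})\subset\RC(\varphi_t)$: if $x\in\fix(\varphi_t^{\HC,\NC})$ then $\varphi_t(x)=\varphi_t^{\EC}(x)$, and since by Theorem \ref{iso} the flow $\{\varphi_t^{\EC}\}$ is a flow of isometries of a left-invariant metric, the orbit $\OC(x,\varphi)=\OC(x,\varphi^{\EC})$ is contained in a compact set; a standard recurrence argument for isometric flows (take any sequence $t_k\to+\infty$, pass to a convergent subsequence of $\varphi^{\EC}_{t_k}(x)$, then correct by a suitable further translate) shows $x\in\RC(\varphi_t)$.

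The hard inclusion is $\RC(\varphi_t)\subset\fix(\varphi_t^{\HC,\NC})$, and this is where the toral-component dichotomy enters. First I would reduce from $\varphi_t$ to $\varphi_t^{\HC,\NC}$: by Proposition \ref{aff}(1) and the isometry property of $\varphi^{\EC}$, a point is recurrent for $\varphi_t$ iff its $\varphi^{\HC,\NC}$-orbit returns near it along a sequence $t_k\to+\infty$ — more carefully, one shows $\RC(\varphi_t)\subset\RC(\varphi_t^{\HC,\NC})$ by using $\varrho(\varphi^{\HC,\NC}_t(x),e)=\varrho(\varphi_t(x),e)$ and choosing convergent subsequences, so it suffices to treat $\psi_t:=\varphi_t^{\HC,\NC}$, whose associated derivation $\DC_{\HC,\NC}=\DC_{\HC}+\DC_{\NC}$ has no elliptic part.

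\textbf{Case $T(G)$ trivial.} Then $Z(G)_0$ is simply connected, and I would push the problem down to $\Ad(G)$ and simultaneously control the center. Given $x\in\RC(\psi_t)$, Proposition \ref{previousLemma} gives $\Ad(x)\in\RC(\Ad\circ\varphi_t)=\fix(\Ad\circ\varphi_t^{\HC,\NC})$, so $c_t:=x^{-1}\psi_t(x)\in Z(G)$ for all $t$; since $t\mapsto c_t$ is continuous with $c_0=e$, in fact $c_t\in Z(G)_0$. The curve $t\mapsto c_t$ satisfies the cocycle identity $c_{t+s}=c_t\,\psi_t(c_s)$, and on $Z(G)_0$ — which is simply connected, so $\exp$ is a diffeomorphism onto it and the group is (up to the vector-space structure) $\fz(G)_0$ with $\psi_t$ acting as $\rme^{t\DC_{\HC,\NC}}|_{\fz}$ — this is exactly the setting of Lemma \ref{gamma} with $A=\DC_{\HC,\NC}|_{\fz(G)_0}$ (no elliptic part). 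Recurrence of $x$ forces $(\gamma_{t_k})$ bounded, hence $\gamma_t\in\fz^-$ for all $t$, so the positive $\psi$-orbit of $x$ is bounded; then I would feed this back — boundedness of the orbit together with the expansion/contraction estimate \eqref{expanding} lifted to $G$ via the dynamical subgroups $G^{\pm},G^0$ of Proposition \ref{dynamical}, plus the fact that $\fix(\varphi^{\HC}_t|)$ absorbs the $G^0$-part — to conclude $\gamma\equiv0$, i.e. $x\in\fix(\psi_t)=\fix(\varphi_t^{\HC,\NC})$.

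\textbf{Case $T(G)$ nontrivial.} Here I would quotient it away: $\pi:G\to G/T(G)=:H$ has $T(G)\subset\fix(\varphi_t)$ a $\varphi$-invariant subgroup, so by Remark \ref{conjugated} the induced flow $\{\bar\varphi_t\}$ on $H$ is well defined with the same elliptic/hyperbolic/nilpotent types, and $T(H)$ is trivial by the cited Proposition 3.3 of \cite{Mau}. If $x\in\RC(\varphi_t)$ then $\pi(x)\in\RC(\bar\varphi_t)$, so by the previous case $\pi(x)\in\fix(\bar\varphi_t^{\HC,\NC})$, i.e. $x^{-1}\varphi_t^{\HC,\NC}(x)\in T(G)$ for all $t$. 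But $T(G)\subset\fix(\varphi_t^{\HC,\NC})$, so the curve $t\mapsto x^{-1}\psi_t(x)$ lies in $T(G)$ and satisfies the cocycle identity with $\psi_t$ acting trivially on $T(G)$, which degenerates it to a one-parameter subgroup of the torus $T(G)$; combined once more with $x\in\RC(\psi_t)$ and Lemma \ref{gamma}-type boundedness on the (vector-space cover of the) relevant subgroup, this curve must be constant $\equiv e$, giving $x\in\fix(\psi_t)$.

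\textbf{Main obstacle.} The delicate point is the $T(G)$-trivial case: turning ``$x^{-1}\psi_t(x)\in Z(G)_0$ and bounded recurrence'' into ``$x$ is a genuine fixed point of $\psi_t$''. One must simultaneously handle the abelian cocycle $\gamma_t$ in $\fz(G)_0$ via Lemma \ref{gamma} \emph{and} the semisimple/solvable structure of $G$ via Propositions \ref{Levi} and \ref{dynamical}, because recurrence in $\Ad(G)$ only pins down $x$ modulo the center, and the center can be noncompact. Ensuring that the boundedness of the positive orbit — which Lemma \ref{gamma} delivers — really forces the $G^+$-component to vanish (so that the orbit sits in $G^{-,0}$, then in $G^0=\fix(\varphi^{\HC}_t)$, then the nilpotent part kills the rest) is the crux, and is precisely where the ``no elliptic part'' hypothesis and the expansion estimate \eqref{expanding} do the real work.
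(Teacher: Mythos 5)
Your architecture tracks the paper's: Proposition \ref{fixx} for the second equality, the isometry of $\varphi_t^{\EC}$ (Theorem \ref{iso}) for $\fix(\varphi_t^{\HC,\NC})\subset\RC(\varphi_t)$, and, for the converse, the passage to $\Ad(G)$ via Proposition \ref{previousLemma}, the central cocycle $\zeta_t=x^{-1}\varphi_t^{\HC,\NC}(x)$, Lemma \ref{gamma} on $\fz(\fg)$, and the dichotomy on $T(G)$. But the concluding step of your $T(G)$-trivial case has a genuine gap. Lemma \ref{gamma} correctly gives you $\gamma_t\in\fz(\fg)\cap\fg^-$ and $(\gamma_t)_{t\geq 0}$ bounded, and you then propose to get $\gamma\equiv 0$ from ``boundedness of the positive orbit together with the expansion estimate (\ref{expanding}) lifted to $G$ via $G^{\pm},G^0$''. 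That implication is false: for $A$ hyperbolic with $Av=-v$, $v\neq 0$, the curve $\gamma_t=(1-\rme^{-t})v$ satisfies the cocycle identity $\gamma_{t+s}=\gamma_t+\rme^{tA}\gamma_s$, lies entirely in $V^-$, and has bounded positive orbit, yet is not identically zero. The missing idea is a second, independent use of recurrence at exactly this point: since $x\in\RC(\varphi_t)$, Proposition \ref{aff}(2) gives $\cl(\OC^+(x,\varphi))=\cl(\OC(x,\varphi))$, so by Proposition \ref{aff}(1) the boundedness of the positive $\varphi^{\HC,\NC}$-orbit upgrades to boundedness of the full two-sided orbit; one then applies Lemma \ref{gamma} a \emph{second} time, with $t_k\rightarrow-\infty$, to place $\zeta_t$ in $G^+$ as well, and concludes from $G^+\cap G^-=\{e\}$ (Proposition \ref{dynamical}). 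Your sketch never produces the $G^+$ membership, and the ingredients you name (``$\fix(\varphi_t^{\HC})$ absorbs the $G^0$-part'', ``the nilpotent part kills the rest'') cannot substitute for it: Lemma \ref{gamma} has already disposed of the $V^0$ and $V^+$ components of $\gamma$, and the whole remaining difficulty is the $V^-$ component.

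Two smaller points. First, the reduction $\RC(\varphi_t)\subset\RC(\varphi_t^{\HC,\NC})$ that you assert is not justified by the identity $\varrho(\varphi_t^{\HC,\NC}(x),e)=\varrho(\varphi_t(x),e)$ alone: that identity yields boundedness and limit points of the $\varphi^{\HC,\NC}$-orbit, not return to $x$. The paper avoids this claim entirely, keeping $x\in\RC(\varphi_t)$ throughout and extracting from recurrence only the boundedness of $(\varphi_{t_k}^{\HC,\NC}(x))_k$. Second, in your Case 2 the cocycle $c_t=x^{-1}\varphi_t^{\HC,\NC}(x)$ valued in $T(G)$ is a genuine one-parameter subgroup of a torus (since $T(G)\subset\fix(\varphi_t^{\HC,\NC})$), and such subgroups are bounded and recurrent, so no ``Lemma \ref{gamma}-type boundedness'' can force triviality; the correct and immediate argument is that $\fix(\varphi_t^{\HC,\NC})$ is a subgroup containing $T(G)$, whence $\fix(\varphi_t^{\HC,\NC})\,T(G)=\fix(\varphi_t^{\HC,\NC})$.
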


\begin{proof} By Proposition \ref{fixx} we just need to show that 
	$$\RC(\varphi_t)=\fix(\varphi_t^{\HC, \NC}).$$ 
	
	Let us consider $x\in \fix(\varphi_t^{\HC, \NC})$ and let $\varrho$ be the left-invariant Riemannian metric for such that $\{\varphi_t^{\EC}\}_{t\in\R}$ is a flow of isometries. Therefore,  
	$$\varrho(\varphi_t(x), e)=\varrho(\varphi^{\EC}_t(x), e)=\varrho(\varphi^{\EC}_t(x), \varphi^{\EC}_t(e))=\varrho(x, e), \;\;\forall t\in\R.$$
	Thus, there exists $t_k\rightarrow+\infty$ such that $(\varphi^{\EC}_{t_k}(x))_{k\in\N}$ is convergent. In particular, for any $\varepsilon>0$ there exists $k_0\in\N$ such that $t_k> t_{k_0}$ implies 
	$$\varrho(\varphi^{\EC}_{t_{k}}(x), \varphi^{\EC}_{t_{k_0}}(x))<\varepsilon\;\;\implies\;\;\varrho(\varphi_{t_k-t_{k_0}}(x), x)=\varrho(\varphi^{\EC}_{t_k-t_{k_0}}(x), x)=\varrho(\varphi^{\EC}_{t_{k}}(x), \varphi^{\EC}_{t_{k_0}}(x))<\varepsilon,$$
	showing that $x\in\RC(\varphi_t)$ and hence $\fix(\varphi_t^{\HC, \NC})\subset\RC(\varphi_t)$.  
	
	Let $x\in\RC(\varphi_t)$. Since $\Ad(\RC(\varphi_t))\subset \RC(\Ad\circ\varphi_t)$, Proposition \ref{previousLemma} shows that   
	$$\forall t\in\R, \;\;\Ad(\varphi^{\HC, \NC}_t(x))=\Ad(x)\;\;\mbox{ or equivalently } \;\;\forall t\in\R, \;\;x^{-1}\varphi^{\HC, \NC}_t(x)\in Z(G).$$
	Let us define the curve
	$$\zeta:\R\rightarrow Z(G), \;\;\; t\in \R\mapsto \zeta_t:=x^{-1}\varphi^{\HC, \NC}_t(x).$$ 
	In particular,  
	$$x\in\fix(\varphi_t^{\HC, \NC})\;\;\iff\;\;\zeta\equiv e.$$
	Moreover, since $\zeta$ is continuous and $\zeta_0=e$ it follows that $\zeta_t\in Z(G)_0$ for all $t\in\R$ and we can divide our analysis in two cases:
	
	{\bf Case 1:} $T(G)$ is trivial;
	
	Here, $Z(G)_0$ is simply connected and the exponential map $\exp:\fz(\fg)\rightarrow Z(G)_0$ is a diffeomorphism. Therefore, the curve, 
	$$\gamma:\R\rightarrow\fz(\fg)\;\;\mbox{ given by the formula }\;\;\zeta_t=\exp\gamma_t,$$ 
	is well defined. Moreover, for any $t, s\in\R$ obtain
	$$\zeta_{t+s}=x^{-1}\varphi^{\HC, \NC}_{t+s}(x)=x^{-1}\varphi_t^{\HC, \NC}\left(\varphi^{\HC, \NC}_s(x)\right)=\left(x^{-1}\varphi^{\HC, \NC}_t(x)\right)\left(\varphi^{\HC, \NC}_t\left(x^{-1}\varphi^{\HC, \NC}_s(x)\right)\right)=\zeta_t\varphi^{\HC, \NC}_t(\zeta_s)$$
	which implies 
	$$\exp\gamma_{t+s}=\exp\gamma_t\varphi_t^{\HC, \NC}\left(\exp\gamma_s\right)=\exp\gamma_t\exp\left(\rme^{t\DC_{\HC, \NC}}\gamma_s\right)=\exp\left(\gamma_t+\rme^{t\DC_{\HC, \NC}}\gamma_s\right).$$
	Therefore, 
	$$\forall t, s\in\R, \;\;\;\;\;\gamma_{t+s}=\gamma_t+\rme^{t\DC_{\HC, \NC}}\gamma_s.$$
	On the other hand, there exists $t_k\rightarrow+\infty$ such that 
	$$\varphi_{t_k}(x) \rightarrow x\;\;\implies\;\; (\varphi^{\HC, \NC}_{t_k}(x))_{k\in\N}\;\mbox{ is bounded }\;\;\implies\;\; (\zeta_{t_k})_{k\in\N}\;\;\mbox{ is bounded }\;\;\implies\;\;(\gamma_{t_k})_{k\in\N}\;\;\mbox{ is bounded }$$
	where the first implication follows from the fact that $\varphi_t^{\EC}$ is an isometry for every $t\in\R$. Since $\DC_{\HC, \NC}$ has no eliptical part, Lemma \ref{gamma} implies that $\gamma_t\in \fg^-$ for all $t\in\R$ and that $(\gamma_t)_{t\geq 0}$ is bounded. Then,
	$$\zeta_t=\exp\gamma_t\in G^-, \;\;\forall t\in\R \;\;\mbox{ and }\;\;(\zeta_t)_{t\geq 0}\;\;\mbox{ is bounded.}$$
	Since $\zeta_t=x^{-1}\varphi_t^{\HC, \NC}(x)$ we conclude that $\OC^+(x, \varphi^{\HC, \NC})$ is also bounded. By Proposition \ref{aff} and the fact that $x\in\RC(\varphi_t)$ we conclude that 
	$$\cl(\OC^+(x, \varphi^{\HC, \NC}))\;\;\mbox{ is bounded }\;\;\implies\;\;\cl(\OC^+(x, \varphi))=\cl(\OC(x, \varphi))\;\;\mbox{ is bounded }\;\;\implies\;\;\cl(\OC(x, \varphi^{\HC, \NC}))\;\;\mbox{ is bounded}.$$
	Therefore, $(\zeta_t)_{t\in\R}$ is bounded and by applying Lemma \ref{gamma} again we concluded that $\zeta_t\in G^+$ for all $t\in\R$. Therefore, 
	$$\forall t\in\R, \;\;\zeta_t\in G^+\cap G^-=\{e\}\;\;\implies \;\;x\in\fix(\varphi_t^{\HC, \NC}).$$
	
	\bigskip
	
	{\bf Case 2:} $T(G)$ is nontrivial.
	
	Since $G/T(G)$ has trivial toral component, we get by the previous case that 
	$$\RC(\varphi_t)\subset \fix(\varphi^{\HC, \NC}_t)T(G).$$ 
	However, $T(G)\subset\fix(\varphi^{\HC, \NC})$ and hence $\RC(\varphi_t)\subset \fix(\varphi^{\HC, \NC}_t)$, finishing the proof.
\end{proof}

\bigskip

As a direct consequence we have the following

\begin{corollary}
	\label{fix}
	With the previous notations, it holds that 
	$$\fix(\varphi_t)=\fix(\varphi^{\EC}_t)\cap\fix(\varphi^{\HC}_t)\cap\fix(\varphi^{\NC}_t).$$
\end{corollary}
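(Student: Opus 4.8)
The plan is to derive the corollary as an immediate consequence of Theorem \ref{fixedpoints} together with the general fact $\fix(\phi_t)\subset\RC(\phi_t)$ and the commutativity of the Jordan flows. First I would observe that since $\varphi_t=\varphi_t^{\EC}\circ\varphi_t^{\HC}\circ\varphi_t^{\NC}$ is a commutative product, the inclusion $\fix(\varphi^{\EC}_t)\cap\fix(\varphi^{\HC}_t)\cap\fix(\varphi^{\NC}_t)\subset\fix(\varphi_t)$ is trivial: if $x$ is fixed by each of the three flows then it is fixed by their composition.

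For the reverse inclusion, let $x\in\fix(\varphi_t)$. Then in particular $x\in\RC(\varphi_t)$, so Theorem \ref{fixedpoints} gives $x\in\fix(\varphi_t^{\HC})\cap\fix(\varphi_t^{\NC})$. It remains to see that $x\in\fix(\varphi_t^{\EC})$. Here I would use that $\varphi_t^{\EC}=\varphi_t\circ(\varphi_t^{\HC})^{-1}\circ(\varphi_t^{\NC})^{-1}$ (using commutativity to write the inverse of the product as the product of inverses in any order); applying this to $x$ and using that all three of $\varphi_t$, $\varphi_t^{\HC}$, $\varphi_t^{\NC}$ fix $x$, we get $\varphi_t^{\EC}(x)=x$ for all $t$, i.e. $x\in\fix(\varphi_t^{\EC})$. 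This establishes both inclusions and hence the equality.

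There is essentially no obstacle here — the work has all been done in Theorem \ref{fixedpoints} and Proposition \ref{fixx}. The only mild point to be careful about is the manipulation of inverses of the commuting flows, which is legitimate precisely because the Jordan decomposition $\XC=\XC_{\EC}+\XC_{\HC}+\XC_{\NC}$ is assumed to exist in this section, so that $\varphi_t^{\EC},\varphi_t^{\HC},\varphi_t^{\NC}$ are genuine one-parameter groups of automorphisms that pairwise commute. I would write the proof in two short lines: the trivial inclusion from commutativity, and the nontrivial one from Theorem \ref{fixedpoints} applied to $x\in\fix(\varphi_t)\subset\RC(\varphi_t)$ combined with $\varphi_t^{\EC}(x)=\varphi_t^{\NC}{}^{-1}(\varphi_t^{\HC}{}^{-1}(\varphi_t^{-1}(\varphi_t(x))))\cdots=x$.
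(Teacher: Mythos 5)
Your proposal is correct and follows essentially the same route as the paper: the trivial inclusion from commutativity, and for the converse $\fix(\varphi_t)\subset\RC(\varphi_t)$ combined with Theorem \ref{fixedpoints} to get $x\in\fix(\varphi_t^{\HC})\cap\fix(\varphi_t^{\NC})$, after which the paper writes $x=\varphi_t(x)=\varphi_t^{\EC}(\varphi_t^{\HC,\NC}(x))=\varphi_t^{\EC}(x)$, which is the same computation as your inverse manipulation.
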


\begin{proof}
	Since any fixed point is recurrent, Theorem \ref{fixedpoints} implies that 
	$\fix(\varphi_t)\subset\RC(\varphi_t)=\fix(\varphi^{\HC}_t)\cap\fix(\varphi^{\NC}_t).$
	Therefore, if $x\in\fix(\varphi_t)$ we get $x=\varphi_t(x)=\varphi_t^{\EC}(\varphi^{\HC, \NC}_t(x))=\varphi_t^{\EC}(x)$ implying that 
	$$\fix(\varphi_t)\subset\fix(\varphi^{\EC}_t)\cap\fix(\varphi^{\HC}_t)\cap\fix(\varphi^{\NC}_t).$$
	Since the opposite inclusion is trivial, the result follows.
\end{proof}

\begin{remark}
	It is important to remark that Theorem \ref{decomposition} was proved in \cite[Lemma 4.3]{Mau2} in the context of automorphisms on simply connected connected Lie groups.
\end{remark}

\bigskip
The next example show that Theorem \ref{fixedpoints} does not holds for discrete-time flows.

\begin{example}
	The $2$-dimensional torus $\T^2=\R^2/\Z^2$ is an abelian Lie group whose Lie algebra is $\R^2$. Let 
	$$A=\left(\begin{array}{cc}
	\ln\left(\frac{3+\sqrt{5}}{2}\right)  & 0\\ 0  &  \ln\left(\frac{3-\sqrt{5}}{2}\right)
	\end{array}\right)\;\;\;\;\;\mbox{ and }\;\;\;\;\; P=\left(\begin{array}{cc}
	2 & 2\\ 1+\sqrt{5}  &  1-\sqrt{5}
	\end{array}\right)$$
	and consider the hyperbolic derivation $\DC=PAP^{-1}$. Since $\tr \DC=0$ we have that 
	$$\rme^{\DC}=\left(\begin{array}{cc}
	1 & 1\\ 1  &  2
	\end{array}\right),$$
	induces the automorphism $\varphi\in\mathrm{Aut}(\T^2)$ given by 
	$$\varphi \left([x, y]\right)=[x+y, x+2y], \;\;\;\mbox{ where }\;\;[x, y]:=(x, y)+\Z^2.$$
	Such automorphism, known as {\it Arnold's cat map} is the recurrent in the torus and hence $\RC(\varphi_n)=\T^2$ (see \cite[Example 1.16]{Arn}), that is $\RC(\psi_n)=\T^2$, where $\varphi_n$ is the discrete-time flow 
	$$\varphi_n:=\underbrace{\varphi\circ\cdots\circ\varphi}_{n-times}.$$
	On the other hand, a simple calculation shows that $\fix(\varphi_n)=\{[0, 0]\}$.
\end{example}

\section{Jordan decomposition}

As a relevant consequence of the main result in Theorem \ref{fixedpoints}, in this section we prove that any linear vector field $\XC$ admits a Jordan decomposition. In particular, Theorem \ref{fixedpoints} holds for any linear vector field on connected groups.

\begin{theorem}
	\label{decomposition}
	Any linear vector field $\XC$ on a connected Lie group $G$ admits a Jordan decomposition.
\end{theorem}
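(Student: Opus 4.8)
The strategy is to first handle the case of a simply connected Lie group $G$, where the Jordan decomposition is known to exist (this is the key input, and the passage has already alluded to it), and then to descend to an arbitrary connected $G$ by writing $G = \tilde G / \Gamma$ with $\tilde G$ its universal cover and $\Gamma \subset Z(\tilde G)$ a discrete central subgroup, using Remark~\ref{conjugated} to push the decomposition down. The obstacle is precisely the descent: one must show that $\Gamma$ is invariant under the flows $\tilde\varphi_t^{\EC}$, $\tilde\varphi_t^{\HC}$, $\tilde\varphi_t^{\NC}$ of the three Jordan components of the lifted field $\tilde\XC$ on $\tilde G$, not merely under $\tilde\varphi_t$ itself.

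First I would set up the lift. Given $\XC$ on $G$ with flow $\{\varphi_t\}$ and derivation $\DC$, the covering map $p:\tilde G \to G$ induces a linear vector field $\tilde\XC$ on $\tilde G$ with the same derivation $\DC$ (identifying $\mathfrak g = \tilde{\mathfrak g}$), whose flow $\{\tilde\varphi_t\}$ satisfies $p \circ \tilde\varphi_t = \varphi_t \circ p$; in particular $\tilde\varphi_t(\Gamma) = \Gamma$ since $\Gamma = \ker p$ is characteristic-looking under this conjugacy. Since $\tilde G$ is simply connected, $\tilde\XC$ admits a Jordan decomposition $\tilde\XC = \tilde\XC_{\EC} + \tilde\XC_{\HC} + \tilde\XC_{\NC}$ with commuting flows $\tilde\varphi_t^{\EC},\tilde\varphi_t^{\HC},\tilde\varphi_t^{\NC}$, whose product is $\tilde\varphi_t$. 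The goal is to show each of these three flows preserves $\Gamma$; then Remark~\ref{conjugated} produces flows $\varphi_t^{\EC},\varphi_t^{\HC},\varphi_t^{\NC}$ on $G$, their associated derivations are $\DC_{\EC},\DC_{\HC},\DC_{\NC}$ by that same remark, they commute (being quotients of commuting flows), and their product is $\varphi_t$ — which is exactly a Jordan decomposition of $\XC$, unique by the remark about uniqueness made earlier.

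The heart of the argument is therefore the invariance $\tilde\varphi_t^{i}(\Gamma) = \Gamma$ for $i \in \{\EC,\HC,\NC\}$. Since $\Gamma$ is a discrete subgroup of the center $Z(\tilde G)$ and $\Gamma \subset \fix(\tilde\varphi_t)$ (each $\tilde\varphi_t$ being an automorphism fixing the discrete set $\Gamma$), fix $g \in \Gamma$ and consider the orbit curves $t \mapsto \tilde\varphi_t^{i}(g)$. These lie in $Z(\tilde G)_0$ (they are continuous, start at $g$, and $Z(\tilde G)$ being central is preserved by each automorphism flow). Now $Z(\tilde G)_0$ is simply connected (as $\tilde G$ has trivial toral component, by the cited \cite[Proposition 3.3]{Mau}), so $\exp:\mathfrak z(\mathfrak g) \to Z(\tilde G)_0$ is a diffeomorphism, and writing $\tilde\varphi_t^{i}(g) = g\exp\eta^i_t$ — or better, working directly in $\mathfrak z(\mathfrak g)$ where the three flows act by $\rme^{t\DC_{\EC}},\rme^{t\DC_{\HC}},\rme^{t\DC_{\NC}}$ restricted to $\mathfrak z(\mathfrak g)$, which is a genuine additive Jordan decomposition of $\DC|_{\mathfrak z(\mathfrak g)}$ — the condition that the product $\rme^{t\DC}$ fixes the lattice point $\log g$ while each factor must individually return it to a lattice point forces, via the discreteness of $\Gamma$ together with Remark~\ref{a} applied on the vector space $\mathfrak z(\mathfrak g)$ (recurrent set = fixed set of hyperbolic-nilpotent part), that each factor fixes $\log g$. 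Concretely: $g \in \fix(\tilde\varphi_t)$ gives $\rme^{t\DC}(\log g) = \log g$, so $\log g$ is a recurrent point of $\rme^{t\DC}|_{\mathfrak z(\mathfrak g)}$, hence a fixed point of $\rme^{t\DC_{\HC}}$ and of $\rme^{t\DC_{\NC}}$ and thus of $\rme^{t\DC_{\EC}}$; therefore $\tilde\varphi_t^{i}(g) = g$ for each $i$, so in particular $\tilde\varphi_t^{i}(\Gamma) = \Gamma$. I expect this last step — pinning down that the lattice $\Gamma$ sits inside the simultaneous fixed-point set of all three components, rather than merely of the product — to be where the real content lies, and it is exactly the place where Theorem~\ref{fixedpoints} (or its vector-space shadow, Remark~\ref{a}) is indispensable.
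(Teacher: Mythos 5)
Your architecture is the same as the paper's: establish the simply connected case, lift $\XC$ to $\widetilde{G}=\tilde G$, observe that the discrete central subgroup $\Gamma=\ker p$ is pointwise fixed by $\tilde\varphi_t$, show it is fixed by each component flow $\tilde\varphi^i_t$, and descend via Remark~\ref{conjugated}. You also correctly locate the crux in the step ``$\Gamma\subset\fix(\tilde\varphi_t)$ implies $\Gamma\subset\fix(\tilde\varphi^i_t)$ for each $i$.'' But your concrete justification of that step has a genuine gap. You argue on $\log g$ for $g\in\Gamma$, i.e.\ you treat $g$ as a point of $\exp(\fz(\fg))=Z(\tilde G)_0$ and apply Remark~\ref{a} to the linear flow $\rme^{t\DC}|_{\fz(\fg)}$. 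This presupposes $\Gamma\subset Z(\tilde G)_0$, which is false in general: for $\tilde G=\widetilde{\Sl_2(\R)}\times\R$ one has $Z(\tilde G)\cong\Z\times\R$ and $\Gamma$ may contain elements outside the identity component, for which $\log g$ is undefined. Your alternative setup $\tilde\varphi^i_t(g)=g\exp\eta^i_t$ is the correct one (the curve $g^{-1}\tilde\varphi^i_t(g)$ does land in $Z(\tilde G)_0$), but then $\eta^i$ satisfies only the cocycle identity $\eta^i_{t+s}=\eta^i_t+\rme^{t\DC_i}\eta^i_s$, not a linear flow, so Remark~\ref{a} does not apply to it. To conclude you would need Lemma~\ref{gamma} together with a boundedness argument for $t\mapsto g^{-1}\tilde\varphi^{\HC,\NC}_t(g)$ (which comes from writing this curve as $g^{-1}\tilde\varphi^{\EC}_{-t}(g)$ and using the left-invariant metric making $\tilde\varphi^{\EC}_t$ an isometry). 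That assembled argument is precisely Case~1 of the proof of Theorem~\ref{fixedpoints}; the paper short-circuits all of this by simply invoking Corollary~\ref{fix} on $\tilde G$, which is legitimate because $\widetilde{\XC}$ already has a Jordan decomposition there.

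A secondary omission: you take the simply connected case as a known input, whereas the paper proves it — Theorem~2.2 of \cite{VAJT} integrates each of $\DC_{\EC},\DC_{\HC},\DC_{\NC}$ to a linear field on the simply connected group, and one must then verify both the identity $\varphi_t=\varphi^{\EC}_t\circ\varphi^{\HC}_t\circ\varphi^{\NC}_t$ and the pairwise commutation of the flows (each reduces, via connectedness, to a computation on $\exp(\fg)$ using the commutativity of the $\DC_i$). This is short but should appear; without it the descent has nothing to descend from.
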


\begin{proof}
	Let us first assume that $G$ is simply connected and let $\DC=\DC_{\EC}+\DC_{\HC}+\DC_{\NC}$ be the Jordan decomposition of $\DC$. Due to the extra topological assumption on $G$, Theorem 2.2 of \cite{VAJT} assures the existence of linear vector fields $\XC_i$ associated with the derivations $\DC_i$ for $i=\EC, \HC, \NC$. By definition, $\XC_{\EC}$ is elliptic, $\XC_{\HC}$ is hyperbolic and $\XC_{\NC}$ is nilpotent.
	
	\begin{enumerate}
		\item Let us start by showing that $\XC=\XC^{\EC}+\XC^{\HC}+\XC^{\NC}$. If $\{\varphi_t^{\EC}\}_{t\in\R}, \{\varphi_t^{\HC}\}_{t\in\R}$ and $\{\varphi_t^{\NC}\}_{t\in\R}$ are the flows of $\XC_{\EC}, \XC_{\HC}$ and $\XC_{\NC}$, respectively, it follows that
		$$\varphi_t(\exp X)=\exp{\rme^{t\DC}X}=\exp(\rme^{t(\DC_{\EC}+\DC_{\HC}+\DC_{\NC})}X)= \exp(\rme^{t\DC_{\EC}}\rme^{t\DC_{\HC}}\rme^{t\DC_{\NC}}X)$$
		$$=\varphi_t^{\EC}(\exp(\rme^{t\DC_{\HC}}\rme^{t\DC_{\NC}}X))=\varphi_t^{\EC}(\varphi_t^{\HC}(\exp(\rme^{t\DC_{\NC}}X)))=\varphi_t^{\EC}(\varphi_t^{\HC}(\varphi_t^{\NC}(\exp X))).$$
	    By the connectedness of $G$ we conclude that
		$$\varphi_t=\varphi_t^{\EC}\circ\varphi_t^{\HC}\circ\varphi_t^{\NC}, \;\;t\in\R.$$
		Finally, by derivation we get that $\XC=\XC_{\EC}+\XC_{\HC}+\XC_{\NC}$ as stated.
		
		\item  Next, we prove that the vector fields $\XC, \XC_{\EC}, \XC_{\HC}$ and $\XC_{\NC}$ commutes. We just show that $[\XC, \XC_{\EC}]=0$, since the other cases are analogous. However, $[\XC, \XC_{\EC}]=0$ is equivalent to $\varphi_s\circ\varphi_t^{\EC}=\varphi_t^{\EC}\circ\varphi_s$, $t, s\in\R$, which follows from the commutativeness of $\DC$ and $\DC_{\EC}$. In fact, for any $X\in\fg$ it holds that 
		$$\varphi_s(\varphi_t^{\EC}(\exp X))=\varphi_s(\exp(\rme^{t\DC_{\EC}}))=\exp(\rme^{s\DC}\rme^{t\DC_{\EC}}X)$$
		$$=	\exp(\rme^{t\DC_{\EC}}\rme^{s\DC}X)=\varphi_t^{\EC}(\exp(\rme^{s\DC}))=\varphi_t^{\EC}(\varphi_s(\exp X)).$$
		The connectedness of $G$ implies $\varphi_s\circ\varphi_t^{\EC}=\varphi_t^{\EC}\circ\varphi_s$, for any $t, s\in\R$, as claimed.
	\end{enumerate}
	
	By 1. and 2. we obtain that any linear vector field on a connected simply connected Lie group admits Jordan decomposition. Let $G$ be a connected Lie group and denote by $\widetilde{G}$ the connected, simply connected covering of $G$. If $\DC$ is the derivation associated with $\XC$, Theorem 2.2 of \cite{VAJT} assures the existence of a linear vector field $\widetilde{\XC}$ on $\widetilde{G}$ with associated derivation $\DC$. Again, the connectedness of $\widetilde{G}$ gives us
	\begin{equation}
	\label{flow}
	\mbox{ for all }\;\;t\in\R, \;\;\pi\circ\widetilde{\varphi}_t=\varphi_t\circ\pi,
	\end{equation}
	where $\pi:\widetilde{G}\rightarrow G=\widetilde{G}/D$ is the canonical projection, $D\subset Z(G)$ is a discrete subgroup and  $\{\varphi_t\}_{t\in\R}$, $\{\widetilde{\varphi}_t\}_{t\in\R}$ denote the flows of $\XC$, $\widetilde{\XC}$, respectively. Equation (\ref{flow}) implies in particular that $\widetilde{\varphi}_t(\ker\pi)=\ker\pi$ and since $\ker\pi$ is discrete we obtain by continuity that $\ker\pi\subset\fix(\widetilde{\varphi}_t)$.
	
	By the simply connected case, $\widetilde{\XC}$ admits a Jordan decomposition $\widetilde{\XC}=\widetilde{\XC}_{\EC}+\widetilde{\XC}_{\HC}+\widetilde{\XC}_{\NC}$. If we denote by $\{\widetilde{\varphi}^i_t\}_{t\in\R}$ the flow of $\widetilde{\XC}_i$ for $i=\EC, \HC, \NC$, we get by Corollary \ref{fix} that 
	$$\fix(\widetilde{\varphi}_t)=\fix(\widetilde{\varphi}^{\EC}_t)\cap\fix(\widetilde{\varphi}^{\HC}_t)\cap\fix(\widetilde{\varphi}^{\NC}_t)$$ 
	and hence $\ker\pi\subset \fix(\widetilde{\varphi}^i_t)$ for $i=\EC, \HC, \NC$. In particular, 
	$$\varphi_t^i:G\rightarrow G, \;\mbox{ defined by the relation }\;\;\pi\circ\widetilde{\varphi}^i_t=\varphi^i_t\circ\pi,\;\;\;t\in\R, \;i=\EC, \HC, \NC$$
	is a one-parameter subgroup of automorphisms of $G$ and its associated vector field $\XC_i$ is elliptic, hyperbolic or nilpotent if $\widetilde{\XC}_i$ is elliptic, hyperbolic or nilpotent (see Remark \ref{conjugated}). Therefore, $\XC=\XC_{\EC}+\XC_{\HC}+\XC_{\NC}$ is the Jordan decomposition of $\XC$ proving the result for the connected case and finishing the proof.	
\end{proof}

\bigskip

As a consequence of Theorems \ref{fixedpoints} and \ref{decomposition} we get the following result.

\begin{theorem}
	The recurrent set of the flow of a linear vector field $\XC$ on a connected Lie group $G$ is
	given as the intersection of the fixed points of the hyperbolic and nilpotent components of its Jordan
	decomposition.
\end{theorem}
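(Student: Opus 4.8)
The plan is to combine Theorems~\ref{fixedpoints} and~\ref{decomposition} directly, with essentially no further work. By Theorem~\ref{decomposition}, any linear vector field $\XC$ on a connected Lie group $G$ admits a Jordan decomposition $\XC=\XC_{\EC}+\XC_{\HC}+\XC_{\NC}$; in particular the flows $\{\varphi_t^{\HC}\}_{t\in\R}$ and $\{\varphi_t^{\NC}\}_{t\in\R}$ of the hyperbolic and nilpotent parts are well defined one-parameter subgroups of $\mathrm{Aut}(G)$. This is exactly the hypothesis under which Theorem~\ref{fixedpoints} was established (the whole of Section~3 operates under the standing assumption that $\XC$ admits a Jordan decomposition), so that hypothesis is now automatically satisfied for every linear vector field on every connected $G$.

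The only remaining task is bookkeeping: I would invoke Theorem~\ref{fixedpoints} for the pair $(\XC,G)$ to obtain
\[
\RC(\varphi_t)=\fix(\varphi_t^{\HC})\cap\fix(\varphi_t^{\NC}),
\]
and then simply note that this is the claimed statement, since $\fix(\varphi_t^{\HC})$ and $\fix(\varphi_t^{\NC})$ are by definition the fixed-point sets of the hyperbolic and nilpotent components of the Jordan decomposition of $\XC$.

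I do not anticipate any genuine obstacle here, since all the substance has already been discharged in Theorems~\ref{fixedpoints} and~\ref{decomposition}; the only point requiring a word of care is logical ordering. Theorem~\ref{fixedpoints} and its supporting results (Propositions~\ref{previousLemma} and~\ref{fixx}, Corollary~\ref{fix}) are proved under the hypothesis ``$\XC$ admits a Jordan decomposition,'' while Theorem~\ref{decomposition} removes that hypothesis, and its proof in turn uses Corollary~\ref{fix} only for the simply connected covering $\widetilde G$, where the decomposition exists by the cited result of \cite{VAJT}. Hence there is no circularity, and the final theorem follows from the two preceding ones exactly as stated. A one-line proof suffices: apply Theorem~\ref{decomposition} to get the decomposition, then apply Theorem~\ref{fixedpoints}.
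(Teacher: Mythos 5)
Your proposal is correct and matches the paper exactly: the paper offers this theorem as a direct consequence of Theorems~\ref{fixedpoints} and~\ref{decomposition} with no further argument. Your additional remark on the non-circularity of the logical ordering (Corollary~\ref{fix} being used in the proof of Theorem~\ref{decomposition} only on the simply connected cover, where the decomposition exists by \cite{VAJT}) is a point the paper leaves implicit, and is worth making.
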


\begin{remark}
	It is important to remark that Theorem \ref{fixedpoints} was proved first in \cite[Theorem 4.4]{Mau2} in the context of automorphisms on simply connected, connected nilpontent Lie groups.
\end{remark}

\begin{example}
	Let $\theta=\left(\begin{array}{cc}
	0  & -1\\ 1  &  0
	\end{array}\right)$ and define $\rho_t:=\rme^{t\theta}$. Consider the semi-direct product $G=\R\times_{\rho}\R^2$. Following \cite{DSAy} a linear vector field of $G$ and its associated derivation are given, respectively, by 
	$$\XC(t, v)=(0, Av+\Lambda_t\xi),	\;\;\;\;\;\mbox{ and }\;\;\DC=\left(\begin{array}{cc}
		0  &  0\\ \xi  &  A
	\end{array}\right),$$
	where  $\xi\in\R^2$, $\theta=\left(\begin{array}{cc}
	\lambda  & -\mu\\ \mu  &  \lambda
	\end{array}\right)$ and $\Lambda_t=(\rho_t-1)\theta^{-1}$. If $\lambda^2+\mu^2\neq 0$, the the eliptic, hyperbolic and nilpotent parts of $A$ reads, respectively, as
	$$A_{\EC}=\left(\begin{array}{cc}
	0  & -\mu\\ \mu  &  0
	\end{array}\right), \;\;\; A_{\HC}=\left(\begin{array}{cc}
		\lambda  & 0\\ 0  &  \lambda
	\end{array}\right)\;\;\;\mbox{ and }\;\;\; A_{\NC}=0.$$
	Consequently, 	   
	$$\DC_{\EC}=\left(\begin{array}{cc}
	0  &  0\\ A_{\EC}A^{-1}\xi  &  A_{\EC}
	\end{array}\right),\;\;\;\;\DC_{\HC}=\left(\begin{array}{cc}
	0  &  0\\ A_{\HC}A^{-1}\xi  &  A_{\HC}
	\end{array}\right)\;\;\mbox{ and }\;\;\DC_{\NC}=0.$$
	are the corresponding eliptic, hyperbolic and nilpotent parts of $\DC$. Hence,
	$$\XC_{\EC}(t, v)=(0, A_{\EC}v+\Lambda_tA_{\EC}A^{-1}\xi), \;\;\; \XC_{\HC}(t, v)=(0, A_{\HC}v+\Lambda_tA_{\HC}A^{-1}\xi)\;\;\mbox{ and }\;\;\XC_{\NC}=0.$$
	Since $\XC_{\NC}=0$ we get 
	$$\RC(\varphi_t)=\fix(\varphi_t^{\HC})=\{(t, v)\in G; \;\;\XC_{\HC}(t, v)=(0, 0)\}.$$
	However, a simple calculation shows us that
	$$\{(t, v)\in G; \;\;\XC_{\HC}(t, v)=(0, 0)\}=\{(t, v)\in G; \;\;\XC(t, v)=(0, 0)\},$$
	implying that 
	$$\RC(\varphi_t)=\fix(\varphi_t)=\{(t, v)\in G; \;\; Av+\Lambda_t\xi=0\}.$$
	In particular, if $\xi=0$ we get $\RC(\varphi_t)=\R\times\{0\}$.
\end{example}

\appendix

\section{One-parameter groups of isometries of left-invariant sub-Riemannian metrics}

In what follows $G$ is a $n$-dimensional connected Lie group and $\fg$ stands for its Lie algebra. The group is endowed with a sub-Riemannian, possibly Riemannian, left-invariant structure defined by a set $\{Y_1,Y_2,\dots,Y_p\}$ of left-invariant vector-fields (with $p\leq n$). More accurately the $p$-uple $(Y_1,Y_2,\dots,Y_p)$ is an orthonormal basis of the subspace $\Delta$ it generates in $\fg$ and defines a left-invariant sub-Riemannian structure on $G$. This structure is Riemannian if $p=n$. It is sub-Riemannian if $p<n$ and we assume in that case that the distribution $\{Y_1,Y_2,\dots,Y_p\}$ is bracket generating. On the other hand let $\XC$ be a linear field on $G$. As previously we denote by $\DC=-\ad(\XC)$ the associated derivation of $\fg$ and by $\{\varphi_t\}_{t\in\R}$ the flow of $\XC$. On the context of sub-Riemannian geometry, Theorem \ref{iso} reads:

\begin{theorem} 
	\label{Main}
	If $\Delta$ is $\DC$-invariant and $\DC|_{\Delta}$ is skew-symmetric then $\{\varphi_t\}_{t\in\R}$ is a one-parameter group of isometries of $G$.
\end{theorem}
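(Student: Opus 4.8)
The statement to prove is Theorem~\ref{Main}: under the hypotheses that $\Delta$ is $\DC$-invariant and $\DC|_\Delta$ is skew-symmetric, the flow $\{\varphi_t\}_{t\in\R}$ of $\XC$ consists of isometries of the left-invariant sub-Riemannian structure determined by $\Delta$. The plan is to reduce this to the already-established Theorem~\ref{iso}, which tells us that $\{\varphi_t\}$ preserves the arc-length of any admissible curve for $\Delta$, together with the characterization of sub-Riemannian distance via admissible curves.

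\emph{Step 1: Recall the sub-Riemannian distance.} For a bracket-generating distribution $\Delta$ (the Chow--Rashevskii hypothesis, which is assumed here when $p<n$, and automatic when $p=n$), the Carnot--Carath\'eodory distance between $x,y\in G$ is defined as the infimum of the arc-lengths $\int_0^T\langle\dot\theta(s),\dot\theta(s)\rangle^{1/2}\,ds$ over all admissible curves $\theta:[0,T]\to G$ joining $x$ to $y$, and by Chow--Rashevskii this infimum is over a nonempty set, so the distance is finite and a genuine metric inducing the manifold topology.

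\emph{Step 2: Transport admissible curves by $\varphi_t$.} The key observation is that each $\varphi_t$ is an automorphism of $G$ whose differential at $e$ is $\rme^{t\DC}$, which preserves $\Delta$ because $\Delta$ is $\DC$-invariant; combining this with $\varphi_t\circ L_g=L_{\varphi_t(g)}\circ\varphi_t$ shows that $(d\varphi_t)$ maps the left-translated distribution $(dL_{\theta(s)})_e\Delta$ into $(dL_{\varphi_t(\theta(s))})_e\Delta$. Hence if $\theta$ is admissible for $\Delta$ then so is $s\mapsto\varphi_t(\theta(s))$, and $\varphi_t$ gives a bijection between admissible curves joining $x,y$ and admissible curves joining $\varphi_t(x),\varphi_t(y)$ (with inverse induced by $\varphi_{-t}$).

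\emph{Step 3: Conclude via Theorem~\ref{iso}.} For any admissible curve $\theta$ joining $x$ to $y$, Theorem~\ref{iso} gives that the arc-length of $\varphi_t\circ\theta$ equals that of $\theta$. Taking infima over all such $\theta$ (using the bijection of Step 2 so that both infima range over corresponding sets), we get $d(\varphi_t(x),\varphi_t(y))=d(x,y)$ for all $x,y\in G$ and all $t\in\R$; that is, each $\varphi_t$ is an isometry. The Riemannian case $p=n$ is included, since then every curve is admissible and the argument is unchanged.

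\emph{Main obstacle.} There is no deep obstacle: the heart of the matter — the arc-length invariance — is exactly Theorem~\ref{iso}, proved earlier. The only point requiring a little care is the appeal to the bracket-generating hypothesis to ensure the Carnot--Carath\'eodory distance is well defined and finite (so that "preserves arc-length of admissible curves'' actually upgrades to "isometry of the metric space''), and checking that $\varphi_t$ induces a genuine bijection on the relevant families of admissible curves so that the infima match up; both are routine given the standing assumptions.
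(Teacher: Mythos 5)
Your proposal is correct and matches the paper's treatment: the paper gives no separate argument for Theorem~\ref{Main}, presenting it simply as Theorem~\ref{iso} ``read in the sub-Riemannian context,'' i.e.\ arc-length preservation of admissible curves plus passage to the Carnot--Carath\'eodory infimum. The details you supply (admissibility is preserved by $\varphi_t$ via $\varphi_t\circ L_g=L_{\varphi_t(g)}\circ\varphi_t$ and the $\DC$-invariance of $\Delta$, and the bijection of admissible curves so the infima agree) are exactly the routine steps the paper leaves implicit.
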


Conversely let $\{\varphi_t\}_{t\in\R}$ be a one-parameter group of isometries of a left-invariant structure on $G$. Let us assume that $\varphi_t$ is also an automorphism of $G$, for each $t\in\R$. In that case the one-parameter group $\{\varphi_t\}_{t\in\R}$ is generated by an infinitesimal automorphism, that is a linear field $\XC$, and to this linear field we can associate as usual the derivation $\DC=-\ad(\XC)$.

The first thing to notice is that each $\varphi_t$ transforms an admissible curve into an admissible curve hence preserves the distribution $\Delta$. Then $\varphi_t$ preserves the length of all admissible curves and its differential should preserve the length of the tangent vectors to admissible curves. Thanks to the fact that
$$\varphi_t\circ L_g=L_{\varphi_t(g)}\circ\varphi_t, \;\;\;\forall t\in\R, g\in G,$$
this implies that $(d\varphi_t)_e=\rme^{t\DC}$ is orthogonal on $\Delta$ for all $t$ and finally that $\DC$ is skew-symmetric w.r.t. the  inner product of $\Delta$.

It is not true that all isometries of left-invariant sub-Riemannian structures that fix the origin are automorphisms. For instance a counter-example was built by John Milnor on the rototranslation group (see \cite{Milnor76}). However it has been proven by Kivioja and Le Donne that in case when $G$ is nilpotent (and connected) the group of isometries is a Lie group of affine transformations (see \cite{KL16}). An affine transformation is by definition the composition of an automorphism with a left translation. Since all left translations are isometries, we can state:

\begin{theorem} 
	\label{Nilpotent}
	Let $G$ be a nilpotent connected Lie group and $(\Delta,\langle\ ,\ \rangle)$ be a sub-Riemannian structure on $G$, where $\Delta$ is a left-invariant bracket generating distribution and $\langle\ ,\ \rangle$ is a left invariant inner product on $\Delta$. The group of isometries of $G$ that preserves the origin is the Lie group of automorphisms of $G$ whose Lie algebra is the set of derivations of $\fg$ that preserve $\Delta$ and are skew-symmetric on $\Delta$.
\end{theorem}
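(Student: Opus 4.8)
The plan is to combine the Kivioja--Le Donne rigidity theorem \cite{KL16}, recalled just above, with the arc-length computation of Theorem~\ref{Main}, this time run in the opposite direction. First I would invoke \cite{KL16}: since $G$ is nilpotent and connected, the isometry group $\mathrm{Isom}(G)$ of $G$ endowed with the distance $d$ associated with $(\Delta,\langle\ ,\ \rangle)$ is a Lie group and every isometry is \emph{affine}, i.e.\ of the form $L_g\circ\phi$ with $g\in G$ and $\phi\in\mathrm{Aut}(G)$. Such a map fixes $e$ precisely when $g\,\phi(e)=e$, that is, since $\phi(e)=e$, when $g=e$; hence, in sharp contrast with the general situation illustrated by Milnor's example \cite{Milnor76}, the isometries of $G$ fixing the origin are \emph{exactly} the group automorphisms of $G$ that happen to be isometries. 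Being the stabilizer of $e$, this set is a closed subgroup of the Lie group $\mathrm{Isom}(G)$, hence is itself a Lie group; this already yields the first two assertions of the statement.

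Next I would pin down \emph{which} automorphisms are isometries. An isometry preserves the distribution (as noted before the statement), and for $\phi\in\mathrm{Aut}(G)$ the identity $\phi\circ L_g=L_{\phi(g)}\circ\phi$ shows that $\phi$ carries the left-invariant distribution generated by $\Delta$ onto the left-invariant distribution generated by $(d\phi)_e\Delta$; hence $\phi$ being an isometry forces $(d\phi)_e\Delta=\Delta$. Granting this, the computation in the proof of Theorem~\ref{Main}, with $(d\phi)_e$ in the role of $\rme^{t\DC}$, shows that $\phi$ preserves the arc-length of every admissible curve (equivalently, that $\phi$ is an isometry for the length distance $d$) if and only if $(d\phi)_e|_\Delta$ is orthogonal for $\langle\ ,\ \rangle$. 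Therefore
$$\mathrm{Isom}_e(G)=\{\phi\in\mathrm{Aut}(G):\ (d\phi)_e\Delta=\Delta\ \text{and}\ (d\phi)_e|_\Delta\in O(\Delta,\langle\ ,\ \rangle)\}.$$

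To compute the Lie algebra I would use the embedding $\phi\mapsto(d\phi)_e$ of $\mathrm{Aut}(G)$ as a closed subgroup of $\mathrm{Aut}(\fg)$, whose Lie algebra is $\mathrm{Der}(\fg)$. A derivation $\DC$ lies in the Lie algebra of $\mathrm{Isom}_e(G)$ iff $\rme^{t\DC}$ preserves $\Delta$ and restricts to an orthogonal map of $(\Delta,\langle\ ,\ \rangle)$ for every $t$; differentiating at $t=0$, these conditions become exactly $\DC\Delta\subset\Delta$ and $\DC|_\Delta+(\DC|_\Delta)^{*}=0$. Conversely, if $\DC$ is a derivation with $\DC\Delta\subset\Delta$ and $\DC|_\Delta$ skew-symmetric, then $\rme^{t\DC}\Delta=\Delta$ and $\rme^{t\DC}|_\Delta=\rme^{t(\DC|_\Delta)}\in O(\Delta)$ for all $t$, and by Theorem~\ref{Main} the one-parameter group $\{\varphi_t\}_{t\in\R}$ generated by $\DC$ consists of isometries fixing $e$, so $\DC$ belongs to that Lie algebra. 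This is the announced description of $\mathrm{Lie}(\mathrm{Isom}_e(G))$.

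The step I expect to be the main obstacle is this last converse inclusion, because it tacitly uses that a derivation of $\fg$ preserving $\Delta$ and skew-symmetric on it really integrates to a one-parameter group of automorphisms \emph{of $G$}. This is automatic when $G$ is simply connected, where $\mathrm{Aut}(G)\cong\mathrm{Aut}(\fg)$; in general I would pass to the universal cover $\pi:\widetilde{G}\to G$, integrate $\DC$ to $\{\widetilde{\varphi}_t\}_{t\in\R}\subset\mathrm{Aut}(\widetilde{G})$, and argue as in the proof of Theorem~\ref{decomposition} that $\{\widetilde{\varphi}_t\}$ fixes the discrete central subgroup $\ker\pi$ and therefore descends to a one-parameter group of automorphisms of $G$. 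Once this is granted, the two inclusions obtained by differentiation and from Theorem~\ref{Main} give $\mathrm{Lie}(\mathrm{Isom}_e(G))=\{\DC\in\mathrm{Der}(\fg):\ \DC\Delta\subset\Delta,\ \DC|_\Delta+(\DC|_\Delta)^{*}=0\}$, which completes the proof. All the remaining content is just the arc-length computation of Theorem~\ref{Main} applied in the reverse direction, exactly as in the ``Conversely'' paragraph preceding the statement, so no further difficulty arises; the single genuinely new ingredient is the Kivioja--Le Donne rigidity theorem.
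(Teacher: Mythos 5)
Your argument follows the same route as the paper, which in fact offers no formal proof of this statement: the theorem is presented as an immediate consequence of the Kivioja--Le Donne theorem together with the ``Conversely'' discussion preceding it, which are exactly the two ingredients you combine. Your reduction of the isometries fixing $e$ to the automorphic isometries, and the characterization of those as the automorphisms $\phi$ with $(d\phi)_e\Delta=\Delta$ and $(d\phi)_e|_\Delta$ orthogonal, are correct and more carefully argued than in the paper.

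However, the step you yourself single out as the main obstacle --- the converse inclusion in the Lie algebra computation --- is a genuine gap, and your proposed patch does not close it. In the proof of Theorem~\ref{decomposition} the invariance $\widetilde{\varphi}_t(\ker\pi)=\ker\pi$ is \emph{deduced} from the prior existence of the flow $\varphi_t$ on $G$ via equation~(\ref{flow}); in your situation the existence of $\varphi_t$ on $G$ is precisely what has to be established, so ``arguing as in Theorem~\ref{decomposition}'' is circular. Moreover the inclusion can genuinely fail: take $G=\T^2=\R^2/\Z^2$ with $\Delta=\fg=\R^2$ and the standard inner product. The derivation $\DC=\left(\begin{smallmatrix}0&-1\\ 1&0\end{smallmatrix}\right)$ preserves $\Delta$ and is skew-symmetric, but $\rme^{t\DC}$ is the rotation by angle $t$, which preserves $\Z^2$ only for $t\in\frac{\pi}{2}\Z$; hence $\DC$ does not integrate to a one-parameter group of automorphisms of $\T^2$. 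Indeed the group of isometries of the flat torus fixing the origin is the finite group of order $8$, with trivial Lie algebra, whereas the skew-symmetric derivations form the one-dimensional algebra $\mathfrak{so}(2)$. So the asserted equality of Lie algebras requires $G$ to be simply connected (in which case $\mathrm{Aut}(G)\cong\mathrm{Aut}(\fg)$ and your differentiation argument closes both inclusions), or the set of derivations on the right-hand side must be cut down to those whose exponentials descend to $G$. This defect is present in the paper's statement as well --- the paper simply does not address it --- but your write-up should not claim that the universal-cover argument repairs it.
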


\subsection{The semi-simple case}

It is assumed in this section that  $\fg$ is a semi-simple Lie algebra, and that $\fg=\mathfrak{l}\oplus\mathfrak{p}$ is a Cartan decomposition of $\fg$. Let $\theta$ be the related Cartan involution, that is the automorphism of $\fg$ whose restriction to $\mathfrak{l}$ is the identity $I_\mathfrak{l}$ and the restriction to $\mathfrak{p}$ is $-I_\mathfrak{p}$.

We assume in what follows that the inner product defining the sub-Riemannian metric is the restriction to the left-invariant distribution $\Delta$ of the associated inner product, that is:
$$
\langle Y,Z\rangle =-B(Y,\theta Z) \quad \mbox{ where $B$ stands for the Killing form}.
$$
All the derivations being inner, let $\DC=-\ad(X)$ for some $X\in \fg$. A straightforward computation using the properties of the Killing form shows that
$$
\forall Y,Z \in \fg \qquad \langle \DC Y,Z \rangle=-\langle Y, (\theta\circ \DC\circ\theta)\ Z\rangle.
$$
Let us assume that $\Delta$ is $\DC=-\ad(X)$-invariant. Then the restriction of $\DC$ is skew-symmetric on $\Delta$ if and only if it commutes with $\theta$. But another straightforward computation shows that:
$$
(\DC\circ\theta)Y=(\theta\circ \DC)Y \quad \mbox{and}\ \ (\DC\circ\theta)Z=(\theta\circ \DC)Z\Longrightarrow (\DC\circ\theta)[Y,Z]=(\theta\circ \DC)[Y,Z].
$$
Since $\Delta$ is assumed to be bracket generating we obtain that $\DC$, and not only its restriction to $\Delta$, must commute with $\theta$. In other words $\DC$ cannot be skew-symmetric on $\Delta$ without being skew-symmetric on $\fg$.

Let $Y\in \fg$. Then $(\DC\circ\theta)Y=[\theta Y,X]$ and $(\theta\circ \DC)Y=\theta[Y,X]=[\theta Y,\theta X]$. If these two expressions are equal for all $Y\in\fg$ then according to the properties of semi-simple Lie algebras we deduce $\theta X=X$ hence $X\in\mathfrak{l}$.

\begin{theorem} 
	\label{SemiSimple}
	Let $G$ be a semi-simple connected Lie group and $(\Delta,\langle\ ,\ \rangle)$ be a sub-Riemannian structure on $G$, where $\Delta$ is a left-invariant bracket generating distribution and $\langle\ ,\ \rangle$ is a left invariant inner product on $\Delta$. Let $\mathfrak{n}$ be the normalizer of $\Delta$ in $\fg$. The group of automorphic isometries is isomorphic to the Lie group whose Lie algebra is equal $\mathfrak{l}\cap\mathfrak{n}$.
\end{theorem}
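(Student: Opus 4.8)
The plan is to combine the structural analysis already carried out in the preceding sections with the characterization of skew-symmetry obtained just above. By Theorem \ref{Main}, every derivation $\DC=-\ad(X)$ that leaves $\Delta$ invariant and is skew-symmetric on $\Delta$ generates a one-parameter group of automorphic isometries; conversely the discussion preceding the statement shows that every automorphic isometry arises this way. So the set of infinitesimal generators of automorphic isometries is exactly the set of $X\in\fg$ (all derivations being inner, since $\fg$ is semi-simple) such that $\DC=-\ad(X)$ preserves $\Delta$ and $\DC|_\Delta$ is skew-symmetric with respect to $\langle\ ,\ \rangle$.

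First I would record that, by the bracket-generating hypothesis together with the implication displayed above, $\DC|_\Delta$ skew-symmetric forces $\DC$ to commute with the Cartan involution $\theta$, hence $\DC$ is skew-symmetric on all of $\fg$ with respect to the associated inner product $-B(\cdot,\theta\cdot)$. Then I would invoke the computation $(\DC\circ\theta)Y=[\theta Y,X]$ versus $(\theta\circ\DC)Y=[\theta Y,\theta X]$: equality for all $Y$ gives $[\theta Y, X-\theta X]=0$ for all $Y$, and since $\fg$ is semi-simple and $\theta$ is bijective the centralizer of all of $\fg$ is trivial, whence $\theta X=X$, i.e. $X\in\mathfrak{l}$. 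Conversely, if $X\in\mathfrak{l}\cap\mathfrak{n}$ then $\DC=-\ad(X)$ preserves $\Delta$ by definition of the normalizer $\mathfrak{n}$, and $\theta X=X$ makes $\DC$ commute with $\theta$ by the same identity, hence $\DC$ is skew-symmetric on $\fg$ and a fortiori on $\Delta$; so $X$ is an admissible generator. This gives a bijection between $\mathfrak{l}\cap\mathfrak{n}$ and the Lie algebra of the group of automorphic isometries.

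Next I would check that $\mathfrak{l}\cap\mathfrak{n}$ really is a Lie subalgebra, so that the statement makes sense: $\mathfrak{n}$ is a subalgebra because the normalizer of a subspace under the adjoint action always is, and $\mathfrak{l}$ is a subalgebra by the Cartan decomposition; their intersection is therefore a subalgebra. The correspondence $X\mapsto -\ad(X)$ is a Lie algebra isomorphism onto its image since $\fg$ is semi-simple (trivial center), and it carries $\mathfrak{l}\cap\mathfrak{n}$ onto the space of skew-symmetric $\Delta$-preserving derivations, which by Theorem \ref{iso}/\ref{Main} and the converse discussion is precisely the Lie algebra of the isometry group that fixes the origin and consists of automorphisms. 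Integrating, the connected group generated by this subalgebra is the group of automorphic isometries, so it is isomorphic to the connected Lie group with Lie algebra $\mathfrak{l}\cap\mathfrak{n}$.

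The main obstacle I anticipate is the careful justification that the group of \emph{automorphic} isometries is connected and is exactly the integral subgroup attached to $\mathfrak{l}\cap\mathfrak{n}$ — one must argue that an automorphic isometry fixing the origin is determined by its differential at $e$ (true on a connected group, since an automorphism is determined by its differential), that this differential is $\rme^{\DC}$ for the relevant $\DC$, and that every such $\DC$ integrates to a global automorphism (automatic here, as $\ad(X)$ exponentiates to $\Ad$-conjugation, or more simply because $\{\varphi_t\}$ in Theorem \ref{Main} is already global). Once the linear-algebraic identities (skew-symmetry $\iff$ commuting with $\theta$, and commuting with $\theta$ $\iff$ $X\in\mathfrak{l}$) are in hand, the rest is bookkeeping about normalizers and integration of subalgebras; the delicate point is purely the passage from the infinitesimal description to the group-level isomorphism.
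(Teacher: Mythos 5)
Your proposal is correct and follows essentially the same route as the paper: the paper's own proof simply cites \cite{KL16} for the fact that the isometries fixing the origin form a Lie group, intersects with $\mathrm{Aut}(G)$, and identifies the Lie algebra as the inner derivations $-\ad(X)$ preserving $\Delta$ and skew-symmetric on $\Delta$, which the discussion preceding the theorem (reproduced in your argument) shows is exactly $\{-\ad(X):X\in\mathfrak{l}\cap\mathfrak{n}\}$. Your extra care about the centralizer argument for $\theta X=X$ and the passage from the subalgebra to the group is a faithful elaboration of the same steps rather than a different method.
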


\begin{proof}
Indeed the group of isometries that fix the origin is a Lie group (\cite{KL16}). The group of automorphic isometries is its intersection with the Lie group of automorphisms of $G$, hence a Lie group. Its Lie algebra is the set of derivations $\DC=-\ad(X)$ that preserve $\Delta$, that is $X\in \mathfrak{n}$, and that are skew-symmetric on $\Delta$ hence on $\fg$, that is $X\in\mathfrak{l}$.
\end{proof}

\end{document}